\newtheorem{theorem}{Theorem}
\newtheorem{corollary}{Corollary}
\newtheorem{proposition}{Proposition}
\theoremstyle{definition}
\newtheorem{definition}{Definition}
\newtheorem{remark}{Remark}
\newcommand{\ir}[1]{\mathcal{#1}}
\newcommand{\RR}{\mathbb{R}}
\newcounter{unit}[section]
\def\theunit{\thesection.\arabic{unit}}
\newcommand{\unit}{\smallskip
\refstepcounter{unit}
\noindent\mbox{{\bf\theunit.\ }}}
\newcommand{\grad}{\nabla}
\begin{document}

\title{Differentiable distance spaces\thanks{The second author was supported by the Hungarian Scientific Research Fund (OTKA) Grant K-111651}}

\author{L. Tam\'assy \and D. Cs. Kert\'esz 
 }

\maketitle

\begin{abstract}
The distance function $\varrho(p,q)$ (or $d(p,q)$) of a distance space (general metric space) is not differentiable in general. We investigate such distance spaces over $\RR^n$, whose distance functions are differentiable like in case of Finsler spaces. These spaces have several good properties, yet they are no Finsler spaces (which are special distance spaces). They are situated between general metric spaces (distance spaces) and Finsler spaces. We will investigate such curves of differentiable distance spaces, which possess the same properties as geodesics do in Finsler spaces. So these curves can be considered as forerunners of Finsler geodesics. They are in greater plenitude than Finsler geodesics, but they become geodesics in a Finsler space. We show some properties of these curves, as well as some relations between differentiable distance spaces and Finsler spaces. We arrive to these curves and to our results by using distance spheres, and using no variational calculus. We often apply direct geometric considerations.\\
{\bf Keywords:} {distance spaces \and Finsler spaces}\\
{\bf MSC class:} 53B40,   51K05, 51K99, 54E35.
\end{abstract}

\section{Introduction}
Distance spaces (general metric spaces) were widely investigated from different point of views, but not with differentiable distance functions $\varrho$. In this paper we investigate distance spaces with differentiable distance functions $\varrho$. These differentiable distance spaces $D^n$ lie between general metric spaces (distance spaces) and Finsler spaces $F^n$, which are special distance spaces. However, these $D^n$ are still far from Finsler spaces. We construct geodesic curves of an $F^n$ as osculation points of geodesic spheres. This construction can be performed also in our $D^n$. The resulting curves are called osculation curves. If our $D^n$ reduces to an $F^n$, then osculation curves become Finsler geodesics. So osculation curves of our $D^n$ can be considered as forerunners of Finsler geodesics. Actually a number of osculation curves become a single geodesic. Under certain conditions osculation curves become quasigeodesics, which are already near to, but not completely same as Finsler geodesics. We show some properties of these curves. Under certain differentiability conditions, every $D^n$ determines an $F^n$, but this relation is not 1:1. Many $D^n$ determine the same $F^n$. We show some relations between differentiable distance spaces and Finsler spaces. Finally, we obtain a theorem on projectively flat Finsler spaces. We often apply distance spheres, convexity and direct geometric considerations, but we do not use variational calculus.

\section{Preliminaries}\label{sec:pre}
 We recall some simple facts on Finsler geometry. The basic notion of a Finsler space $F^n=(M,\ir F)$ over the base manifold $M$ and with the Finsler metric $\ir F$ is the arc length $s_F(x(t))$ of a curve $x(t)$, $t\in I=[\alpha,\beta]$:
\begin{equation}\label{eq:arcint}
  s_F(x(t)):=\int_\alpha^\beta\ir F(x(t),\dot x(t))dt.
\end{equation}
$\ir F$ is a function on $TM$:
\[
 \ir F\colon TM\to\RR_+,\quad (p,y)\longmapsto \ir F(p,y),\quad p\in M,\ y\in T_pM,
\]
with the defining properties
\begin{itemize}
  \item[(F1)] \emph{$\ir F$ is $C^0$ on $TM$ and $C^\infty$ on $TM\setminus\{0\}$;}
  \item[(F2a)] \emph{$\ir F(p,\lambda y)=|\lambda| F(p,y)$ if $\lambda\in\RR$ (absolute homogeneity);} or
  \item[(F2b)] \emph{$\ir F(p,\lambda y)=\lambda F(p,y)$ if $\lambda\in\RR_+$ (positive homogeneity);}
  \item[(F3)] \emph{$\frac{\partial^2 \ir F^2}{\partial y^i\partial y^j}(p,y)$ are the coefficients of a positive definite quadratic form.}
\end{itemize}
The indicatrix
\[
 I(p_0):=\{y\mid \ir F(p_0,y)=1\}
\]
of $F^n$ at $p_0$ is a hypersurface (level surface) of $z=\ir F(p_0,y)$ in $T_{p_0}M$. It is strictly convex in the sense that it bounds a strictly convex open set. (F2a) means that the indicatrices are symmetric, while from (F2b) this does not follow. The Finsler norm of $y\in T_{p_0}M$ is
\[
 |y|_F:=\ir F(p_0,y).
\]
In case of (F2a) by this norm $T_{p_0}M$ becomes a Banach space. Dividing $x(t)$ into $N$ small parts $dx$, the arc length $s_F$ intuitively can be obtained in the following way
\begin{equation}\label{eq:arcintdx}
 s_F(x(t))\approx\sum|dx|_F=\sum\ir F(x,dx)=\sum\ir F(x,\frac{dx}{dt})dt\longrightarrow \int_\alpha^\beta\ir F(x(t),\dot x(t))dt.\tag{\ref{eq:arcint}'}
\end{equation}
The decisive important (F2a) is equivalent to the property that the arc length $s_F(x(t))$ is independent of the reparametrization of the curve $x(t)$ including the change of the orientation. In this case the metric is called reversible, while in the more general case of (F2b) it is irreversible. The Finsler distance of two points $a,b\in M$ is given by
\begin{equation}\label{eq:Fdist}
  \varrho^F(a,b)=\inf_{x\in\Gamma}\int^\beta_\alpha\ir F(x(t),\dot x(t))dt
\end{equation}
where $\Gamma$ means the collection of the curves $x(t)$ connecting $a$ and $b$. If in \eqref{eq:Fdist} $\inf$ is attained by a curve, then this curve is an extremal, a minimizing geodesic.

In contrary to Finsler spaces, in a distance space $D^n=(M,\varrho)$ the basic notion is the distance $\varrho(a,b)$ given by the distance function
\[
 \varrho\colon M\times M\to\RR_+,\quad (a,b)\longmapsto \varrho(a,b).
\]
[In a way similar to \eqref{eq:arcintdx} one can define also ``distance arc length'' $s_D(x(t))$]. $\varrho$ has to obey to the rules
\begin{itemize}
\item[(D1)] \emph{$\varrho(a,b)\ge 0$, and $\varrho(a,b)=0\quad \Longleftrightarrow\quad a=b$.}
\item[(D2)] \emph{$\varrho(a,b)=\varrho(b,a)$.}
\item[(D3)] \emph{$\varrho(a,c)+\varrho(c,b)\geq\varrho(a,b)$.}
\end{itemize}

Both Finsler and distance spaces were widely investigated. We refer here on the books \cite{BCS00:intr}, \cite{blumenthal1953theory}, \cite{MR1835418} and \cite{MR3156529} only. These books contain a vast old and up-to-date bibliography of works on different aspects of the geometry of distance and Finsler spaces. Also Busemann's G-spaces \cite{Bu2} represents an important class of distance spaces; in his dissertation M. G. Knecht \cite{knecht2006construction} found a quite weak curvature condition, which guarantees that a length metric space carries a differential structure and a Finsler metric. The Finsler distance of a reversible Finsler space satisfies (D1)--(D3). So every reversible Finsler space is a distance space, but not conversely. One of the main difference between Finsler and distance spaces is the lack of the differentiability at distance spaces. We suppose the differentiability of the distance function $\varrho$. Such distance spaces are nearer to Finsler spaces, yet they still considerably differ from them.
      
Our investigations will be local. So we can assume that the base manifold is $\RR^n$. In the most cases we will suppose that the distance spheres and the geodesic spheres are strictly convex in $\RR^n$. In the most cases also the Finsler spaces are supposed to be reversible.  

In section~\ref{sec:Finsler} we show that in a Finsler space $F^n=(\RR^n,\ir F)$ the strict convexity of the geodesic spheres implies that geodesics between two points are unique (Theorem~\ref{thm:convex}). Also we construct Finsler geodesics by geodesic spheres in two different ways. In section~\ref{sec:osc} we consider osculation points of two distance spheres with fix centers and varying radii. These points form an osculation curve $o(r)$. If $\varrho=\varrho^F$ (i.e. $D^n=F^n$), then these curves are Finsler geodesics. Osculation curves have some properties common with Finsler geodesics, but they have not only such ones. In contrary to Finsler geodesics, a segment of an osculation curve need not to be an osculation curve. An osculation curve whose segments are again osculation curves will be called quasigeodesic. We find their properties, which are very similar to that of Finsler geodesics (Theorems~\ref{thm:qC} and \ref{thm:q}). In the last section~\ref{sec:Fins} we investigate the relation of distance spaces to Finsler spaces. We show that a distance space determines a `weak' Finsler space (whose indicatrices may be only convex and not necessarily strictly convex). (Theorem~\ref{thm:2}). But this relation is not 1:1, since different distance spaces may determine the same Finsler space. The distance arc length and the Finsler arc length of a curve equal (Theorem~\ref{thm:3}), but this does not yield $D^n=F^n$. For this a necessary and sufficient condition is given in Theorem~\ref{thm:4}. Finally we show that a projectively flat $F^n=(\RR^n,\ir F)$ with geodesic spheres, which are symmetric in $\RR^n$, is a Minkowski space (Theorem~\ref{thm:5}).

\section{Finsler geodesics and geodesic spheres}\label{sec:Finsler}
We show some relations between Finsler geodesics and geodesic spheres including the definition of the Finsler geodesic by geodesic spheres. First we recall a classical definition of shortest geodesics in a metric space, especially in a Finsler space.

\unit Let $g(t)$, $t\in I=[\alpha,\beta]$ be a shortest geodesic of a Finsler space, and $g[a,b]$ an arc of it. Then
\begin{subequations}
  \begin{equation}\label{eq:2a}
\varrho^F(a,x)+\varrho^F(x,b)=\varrho^F(a,b),\qquad \forall\,x\in g[a,b].
  \end{equation}
This is the additivity property of the geodesics. Nevertheless, \eqref{eq:2a} alone does not assure that $g(t)$ is a geodesic arc. To this it is necessary that \eqref{eq:2a} be satisfied for any segment of $g(t)$, that is
\begin{equation}\label{eq:2b}
\varrho^F(g(t_1), g(t))+\varrho^F(g(t),g(t_2))=\varrho^F(g(t_1),g(t_2)),\qquad \forall\,t_1\leq t\leq t_2\mbox{ in } I,
\end{equation}
\end{subequations}
which contains \eqref{eq:2a} as a special case. \eqref{eq:2b} is a characteristic property of the shortest Finsler geodesics. 

\unit{} A geodesic sphere of a Finsler space $F^n=(\RR^n,\ir F)$ centered at the point $a\in\RR^n$ with radius $r$ is defined by
\[
 S_a(r):=\{q\in\RR^n\mid \varrho^F(a,q)=r\}.
\]
$S_a(r)$ is a smooth hypersurface of $\RR^n$, and diffeomorphic to the Euclidean unit sphere $S^{n-1}\subset E^n$. We show that the strict convexity of the geodesic spheres implies that between two points exists at most one geodesic.

\begin{theorem}\label{thm:convex}
 (a) If the geodesic spheres of a Finsler space $F^n=(\RR^n,\ir F)$ are strictly convex, then between any pair of points $a$, $b$, there exists at most one geodesic.
 
 (b) If between $a$ and $b$ there exist two geodesics $g_1(s)$ and $g_2(s)$, then among the geodesic spheres centered at $a$ or $b$ there exist infinitely many not strictly convex ones.
\end{theorem}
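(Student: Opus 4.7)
The plan is to prove (a) by contradiction and then derive (b) as a quantitative strengthening of the same argument. Suppose two distinct shortest geodesics $g_1,g_2\colon[0,L]\to\RR^n$, parametrised by arclength with $L=\varrho^F(a,b)$, join $a$ to $b$. Since $g_1\ne g_2$, continuity gives some $r\in(0,L)$ with $x:=g_1(r)\ne y:=g_2(r)$. Applying the additivity property \eqref{eq:2b} to each geodesic yields $\varrho^F(a,x)=\varrho^F(a,y)=r$ and $\varrho^F(x,b)=\varrho^F(y,b)=L-r$, so both $x$ and $y$ lie in the intersection $S_a(r)\cap S_b(L-r)$.

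The heart of the argument is to exploit strict convexity of these two spheres (in $\RR^n$, as stipulated in the preliminaries) by producing a third, shorter competitor. Consider the Euclidean midpoint $m:=\tfrac12(x+y)$. Strict convexity of $S_a(r)$ forces the open chord from $x$ to $y$ into the open ball $B_a(r)=\{p:\varrho^F(a,p)<r\}$, so $\varrho^F(a,m)<r$. The same reasoning applied to $S_b(L-r)$ yields $\varrho^F(b,m)<L-r$. The triangle inequality (D3) then gives the contradiction
\[
 L=\varrho^F(a,b)\le\varrho^F(a,m)+\varrho^F(m,b)<r+(L-r)=L,
\]
which proves (a).

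For (b), I would simply contrapose: whenever $g_1(r)\ne g_2(r)$, the argument above shows that at least one of $S_a(r)$, $S_b(L-r)$ must fail to be strictly convex. Continuity of $g_1$ and $g_2$ makes $U:=\{r\in(0,L):g_1(r)\ne g_2(r)\}$ open and nonempty, hence uncountable, and a pigeonhole step then forces infinitely many non-strictly-convex spheres to occur either among the $a$-centred family $\{S_a(r)\}$ or among the $b$-centred family $\{S_b(L-r)\}$. The main obstacle I anticipate is not technical but conceptual: one must justify mixing the \emph{Euclidean} midpoint with the \emph{Finsler} distance $\varrho^F$. This is legitimate precisely because the preliminaries fix the base manifold as $\RR^n$ and interpret ``strict convexity of a sphere'' with respect to that ambient linear structure, so the Euclidean midpoint of two points on $S_a(r)$ is a well-defined element of $\RR^n$ on which $\varrho^F$ can be evaluated.
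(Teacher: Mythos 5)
Your proof is correct, and it reorganizes the argument rather than reproducing the paper's. For part (a) the paper does not use a midpoint: it fixes $r<\varrho^F(a,b)$, takes the smallest $\bar r$ for which $S_a(r)$ and $S_b(\bar r)$ meet, notes that by strict convexity these spheres osculate at a unique point $p_0$, identifies $p_0$ as the unique minimizer of $q\mapsto\varrho^F(a,q)+\varrho^F(q,b)$ on $S_a(r)$, and concludes that every minimizing geodesic crosses $S_a(r)$ at that same point. Your route instead extracts a direct metric contradiction: two distinct points of $S_a(r)\cap S_b(L-r)$ yield a Euclidean midpoint $m$ strictly inside both balls, so $\varrho^F(a,m)+\varrho^F(m,b)<L$. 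This is closer in spirit to the paper's own proof of (b), which also pushes the chord $\ell(p_1,p_2)$ into the interior of one sphere by strict convexity, but which concludes qualitatively (``one of the two spheres cannot be strictly convex'') rather than via the triangle inequality. Your version buys a single unified lemma covering both parts, and your handling of (b) --- the open set $U=\{r:g_1(r)\neq g_2(r)\}$ followed by a pigeonhole over the two families of spheres --- is in fact more careful than the paper's closing assertion that the dichotomy holds for \emph{every} $r\in(0,\Delta)$, since it holds only where the two geodesics separate (still an infinite set, so the conclusion survives). The one step you should make explicit, which the paper also leaves implicit, is the identification of the strictly convex region bounded by $S_a(r)$ with the metric ball $\{p:\varrho^F(a,p)<r\}$, so that ``the open chord lies inside the sphere'' genuinely implies $\varrho^F(a,m)<r$; this uses that $S_a(r)$ is a compact hypersurface diffeomorphic to $S^{n-1}$ whose bounded complementary component contains $a$ and consists of points at distance less than $r$.
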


\begin{proof}
 (a) We suppose that the geodesic spheres are strictly convex in $\RR^n$, and that between $a$ and $b$ there exists a geodesic arc $g[a,b]$. Let $r$ be smaller than the Finsler distance between $a$ and $b$: $r<\varrho^F(a,b)=\Delta$. Denote by $\bar r$ the smallest value, for which $S_a(r)$ (with a fix $r$) and $S_b(\bar r)$ still have a common point $p_0$. Because of the strict convexity of $S_a(r)$ and $S_b(\bar r)$, $p_0$ is unique. These two geodesic spheres can not be intersecting, for then $\bar r$ would not be minimal. So these geodesic spheres are osculating from outside at $p_0$. 
 
 We know that for the points $q\in\RR^n$,
 \begin{equation}\label{eq:Fdisttriang}
 \varrho^F(a,q)+\varrho^F(q,b)\geq\varrho^F(a,b),
 \end{equation}
 and we obtain equality  in \eqref{eq:Fdisttriang} exactly for the points $p\in g[a,b]$. Then
 \begin{equation}\label{eq:pqmin}
  \varrho^F(a,p)+\varrho^F(p,b)=\min_{q\in\RR^n}\big\{\varrho^F(a,q)+\varrho^F(q,b)\big\}.
 \end{equation}
 If we confine $q$ to $S_a(r)$, then
 \begin{equation}
  \min_{q\in S_a(r)}\big\{\varrho^F(a,q)+\varrho^F(q,b)\big\}
 \end{equation}
 occurs at the unique osculation point $p_0$ of $S_a(r)$ and $S_b(\bar r)$. Hence $q[a,b]\cap S_a(r)$ is the unique $p_0$. This is true for any $r\in[0,\Delta]$. This shows the unicity of $g[a,b]$.
 
  (b) Suppose that between $a$ and $b$ there exist two different geodesics $g_1[a,b]$ and $g_2[a,b]$. Then there exist $r\in(0,\Delta)$  such that
  \[
   S_a(r)\cap g_1[a,b]=p_1\neq p_2=S_a(r)\cap g_2[a,b].
  \]
  $p_1$ is a point of $S_a(r)$, $r=\varrho^F(a,p_1)$, and also of $S_b(\bar r)$, $\bar r=\Delta-\varrho^F(a,p_1)$. These two geodesic spheres have no common inner point $p$, namely in this case we would have $\varrho^F(a,p)+\varrho^F(p,b)<\Delta$, which contradicts \eqref{eq:Fdisttriang}. Therefore $S_a(r)$ and $S_b(\bar r)$ are geodesic spheres osculating at $p_1$. The same holds at the point $p_2$. $S_a(r)$ can be strictly convex only if the straight line segment $\ell(p_1,p_2)$ between $p_1$ and $p_2$ lies in the inside of $S_a(r)$ (except $p_1$ and $p_2$). In this case $\ell(p_1,p_2)$ can not be in $S_b(\bar r)$ since $S_a(r)$ and $S_b(\bar r)$ have no common inner point (they are osculating from outside). In this case $S_b(\bar r)$ can not be convex. We obtain a similar result if we suppose $\ell(p_1,p_2)$ to be in $S_b(\bar r)$. So if there exist two geodesics $g_1[a,b]$ and $g_2[a,b]$, then either $S_a(r)$ or $S_b(\bar r)$ is not strictly convex. This is true for every $r\in(0,\Delta)$. Thus there exist infinitely many not strictly convex geodesics spheres centered at $a$ and $b$.
\end{proof}

In part (a) we have shown that convexity yields the unicity. We give a simple example, which shows that the lack of convexity may cause the lack of unicity. Let $a^*$ and $b^*$ be antipodal points of $S^n\subset E^{n+1}$. Then the (geodesic) spheres $S_{a^*}(r^*)$ and $S_{b^*}(\bar r^*)$ with radii $r^*+\bar r^*=\pi$ coincide. Then also their stereographic images $S_a(r)$ and $S_b(\bar r)$ are coinciding geodesic spheres in the inherited metric. However the centers $a$ and $b$ are on different sides of the coinciding geodesic spheres $S_a(r)$ and $S_b(\bar r)$. So if $S_a(r)$ is convex in $\RR^n$, then $S_b(\bar r)$ is not so, or conversely, and in the same time the images of the numerous great circles (i.e., geodesics of $S^n$) between $a^*$ and $b^*$ become geodesics between $a$ and $b$ in the inherited metric.

\unit \label{ssec:Fosc}  We show that points of a geodesic curve of a Finsler space $F^n=(\RR^n,\ir F)$ with strictly convex geodesic spheres can be characterized (defined) as osculation points of geodesic spheres centered at its two arbitrarily chosen points.

Let $g(s)$ with arc length parameter $s\in(-\infty,\infty)$ be a geodesic curve of $F^n$ with $g(0)=a$, $g(\Delta)=b$, and  $g(r)$ with $0<r<\Delta$ a third point of $g(s)$ between $a$ and $b$. Denoting $\bar r=\varrho^F(g(r),b)=\Delta-r$ we obtain that $g(r)$ is a common point of $S_a(r)$ and $S_b(\bar r)$. These geodesic spheres again must be osculating and cannot be intersecting (see the proof of Theorem~\ref{thm:convex}). Then $g(r)$ is an osculation point of $S_a(r)$ and $S_b(\bar r)$. Because of the strict convexity of the geodesic spheres, to $S_a(r)$ there exists a unique $S_b(\bar r)$ osculating from outside, and for the same reason also the osculation point $c(r)$ is unique. So the osculation point $c(r)$ is exactly $g(r)$. This is true for any $r\in [0,\Delta]$.

Now let $r>\Delta$. Then $a=g(0)$, $g=b(\Delta)$ and $g(r)=c$ are three points on $g(s)$ in this arrangement. $c$ is again a common point of $S_a(r)$ and $S_b(\bar r)$, $\bar r= r-\Delta$. $S_a(r)$ and $S_b(\bar r)$ must osculate at $c$, that is, they can not have more common points. Namely suppose that they have still another common point $p$. Then the triangle $(a,b,p)$ yields the inequality
\begin{equation}\label{eq:abp}
 \varrho(a,b)+\varrho(b,p)>\varrho(a,p)=r,
\end{equation}
while on the geodesic $g(s)$ we have
\begin{equation}\label{eq:abc}
 \varrho(a,b)+\varrho(b,c)=\varrho(a,c)=r.
\end{equation}
But $\varrho(b,p)=\varrho(b,c)$, for $p,c\in S_b(\bar r)$. Thus \eqref{eq:abp} and \eqref{eq:abc} are contradicting, and hence $c$ is the unique common point of $S_a(r)$ and $S_b(\bar r)$.

This is true for any $r\in (\Delta,\infty)$. So any $g(r)$, $r>0$ is an osculation point of two geodesic spheres centered at $a$ and $b$ respectively, and any osculation point $p$ is a point of $g(s)$. Interchanging the role of $a$ and $b$ we obtain that the above statements are true for the whole $g(s)$, $s\in(-\infty,\infty)$. Thus this property characterizes the points of $g(s)$. So for the geodesics of our $F^n$ we obtain the following alternative

\begin{definition}\label{def:osc}
  In a Finsler space $F^n=(\RR^n,\ir F)$ with strictly convex geodesic spheres a geodesic consists of the osculation points of geodesic spheres with different radii centered at its two arbitrary points $a$ and $b$.
\end{definition}

\unit\label{ssec:Finsplane} In the above definition $a$ and $b$ were two arbitrary points of $g(s)$. So if we choose another point $a^*\in g(s)$ in place of $a\in g(s)$, then also $S_{a^*}(r^*)$, $r^*= \varrho^F(a^*,c)$ and $S_b(\bar r)$, $\bar r=\varrho^F(c,b)$ osculate each other at a point $c=g(r)$, and the three geodesic spheres $S_a(r)$, $S_b(r)$ and $S_{a^*}(r^*)$ have a common tangent plane $\varSigma_c$ at $c$. This leads to a further definition of the Finsler geodesics.

We show that at an arbitrary point $p_0$ of a Finsler geodesic $g(s)$ of our $F^n$ there exists a hyperplane $\varSigma_{p_0}$, such that the points of $g(s)$, and only these are centers of geodesic spheres osculating $\varSigma_{p_0}$ at $p_0$. Indeed, let $a$ be a point of $g(s)$, and $\varSigma_{p_0}$ the tangent plane of $S_a(r)$, $r=\varrho^F(a,p_0)$ at $p_0$: $T_{p_0}S_a(r)=\varSigma_{p_0}$. Let $p$ be an arbitrary point of $g(s)$. Then, as we have showed in the proof of Theorem~\ref{thm:convex}(a), $S_a(r)$, $r=\varrho^F(a,p_0)$ and $S_p(\bar r)$, $\bar r=\varrho^F(p,p_0)$ osculate at $p_0$, and $T_{p_0}S_a(r)=T_{p_0}S_p(\bar r)$. Thus any $p\in g(s)$ is a center of a geodesic sphere osculating $\varSigma_{p_0}$ at $p_0$. 

Conversely, we show that if a point $q\in\RR^n$ is the center of a geodesic sphere $S_q(r^*)$, $r^*=\varrho^F(q,p_0)$ osculating $\varSigma_{p_0}$ at $p_0$, then it belongs to $g(s)$. We supposed that $T_{p_0}S_q(r^*)=\varSigma_{p_0}$. Let $g^*(s)$ be the geodesic through $q$ and $p_0$. Its tangent at $p_0$ is $\dot g^*(0)$. We know that a geodesic sphere is perpendicular (in the sense of $F^n$) to any geodesic emanating from its center. Then $\dot g(0)\bot T_{p_0}S_a(r)=\varSigma_{p_0}$, and also $\dot g^*(0)\bot T_{p_0}S_q(r^*)=\varSigma_{p_0}$. Thus (with an appropriate orientation of $g^*(s)$) $\dot g(0)=\dot g^*(0)$. But from a point $p_0$ in a direction emanates a single geodesic. So $g(s)=g^*(s)$, and thus $q\in g(s)$. This yields the following equivalent

\begin{definition}     
In a Finsler space $F^n=(\RR^n,\ir F)$ with strictly convex geodesic spheres a geodesic consists exactly of the centers of those geodesic spheres, which osculate a hyperplane $\varSigma_{p_0}$ at its point $p_0$.
\end{definition}

\section{Osculation curves and quasigeodesics of differentiable distance spaces}\label{sec:osc}

\subsection{Differentiability conditions}
We investigate distance spaces $D^n=(\RR^n,\varrho)$ over the linear space $\RR^n$. Beyond (D1)--(D3) we suppose that the distance function $\varrho(x,y)$ is smooth

\begin{itemize}
\item[(D4)] \emph{$\varrho\colon\RR^n\times\RR^n\to\RR_+$, $(x,y)\mapsto\varrho(x,y)$ is $C^\infty$
     excluding $x=y$, where $\varrho$ is $C^0$.}
\end{itemize}

(D4) is an important property also of the Finsler distance function. In this paper distance spheres play an important role. We do not know of their earlier applications in such investigations. In order to avoid heaping of notations we use the same sign for geodesic- and distance-spheres. The context clearly shows, which one we speak of. A distance sphere centered at a point $a$ and having radius $r$ is defined and denoted by
\[
 S_a(r):=\{q\in\RR^n\mid \varrho(a,q)=r\}.
\]
This is a counterpart of geodesic- and Euclidean spheres. We also suppose that
\begin{itemize}
\item[(D5)] \begin{enumerate}
\item[(a)] \emph{distance spheres are strictly convex in $\RR^n$;}
\item[(b)] \emph{they have everywhere positive Gauss curvature;}
\item[(c)] \emph{for any point $p\in\RR$, the $1$-parameter family $S_p(r)$ ($r>0$) is diffeomorphic to that of Euclidean spheres in $\RR^n$ centered at $0$;}
\item[(d)] \emph{if a distance ball contains another one, their spheres $S_a(r)$ and $S_b(\bar r)$ can osculate at most in  one point $p$, and in any direction $v\in T_pS_a(r)=T_pS_b(\bar r)$, the normal curvature of $S_a(r)$ is smaller than that of $S_b(\bar r)$;}
 \end{enumerate}
\end{itemize}
From this it also follows that distance spheres osculate their tangent planes exactly in the first order. (D1)--(D4) and (D5d) are satisfied by any reversible $F^n$. 
We suppose that our $F^n=(\RR^n,\ir F)$ are reversible, and satisfy still (D5a).
[We remark that (D5a) often can be replaced by the property
\begin{itemize}
 \item [(D5a$^*$)] if two distance spheres have no common inner point, then their boundaries have at most one common (osculation) point. 
\end{itemize}
In this case the base manifold of $D^n$ need not be the $\RR^n$ but it can be any connected differentiable manifold $M^n$.]

\subsection{Construction of osculation curves}\label{ssec:osc}
We construct in a distance space through any arbitrary pair of points $a,b\in\RR^n$ a $1$-parameter set of osculation  points $o(r)$, $r\in(-\infty,\infty)$. We perform basically the same construction as in section \ref{ssec:Fosc}, but we replace geodesic spheres by distance spheres. Since $S_a(r)$ is a compact set, there are points $o_1(r)$ of $S_a(r)$, which are nearest to $b$, and points $o_2(r)\in S_a(r)$, which are furthest from $b$. $S_a(r)$ and $S_b(r_1)$, $r_1=\varrho(o_1(r),b)$ must be osculating and cannot be intersecting, for then would exist a $p\in S_a(r)$ which is nearer to $b$, than $o_1(r)$. Also $o_1(r)$ is unique because $S_a(r)$ and $S_b(r_1)$ are strictly convex.
 So $S_a(r)$ and $S_b(r_1)$ are osculating. So
 \begin{equation}\label{eq:o1}
 o_1(r)=(p\in S_a(r)\mid \varrho(p,b) \mbox{ is minimal}),\quad r\in\left[0,\infty\right),\quad\mbox{and }o_1(0)=a. 
 \end{equation}
For similar reasons also $S_a(r)$ and $S_b(r_2)$, $r_2=\varrho(o_2(r),b)$ must be osculating. Then $S_a(r)$ is in the inside of $S_b(r_2)$, and also $o_2(r)$ is unique by (D5d). 
 In this case
 \begin{equation}\label{eq:o2}
 o_2(r)=(p\in S_a(r)\mid \varrho(p,b) \mbox{ is maximal}),
 \end{equation}
Let us denote the parameter of $o_2$ by $-r(\leq0)$. Thus
\begin{equation}\label{eq:oo1o2}
 o(r):=\begin{cases}
        o_1(r),\quad r\geq 0\\
        o_2(r),\quad r\leq0
       \end{cases}\quad r\in(-\infty,\infty)
\end{equation}
is a $1$-parameter point set consisting exactly from the osculation points (from outside or inside) of distance spheres of different radii centered at $a$ and $b$, which are called the generator points of $o(r)$.
     
\subsection{Differentiability of $o(r)$}
\begin{proposition}\label{prop:osc}
  The point set $o(r)$, $r\in(-\infty,\infty)$ is a $C^\infty$ curve except at the generator points. 
\end{proposition}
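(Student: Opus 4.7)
My plan is to prove smoothness of $o_1$ and $o_2$ separately (and conclude for $o$) via the implicit function theorem applied to the Lagrange-multiplier characterization of each osculation point. For $r>0$ with $o_1(r)\notin\{a,b\}$, the point $q=o_1(r)$ is the \emph{unique} minimizer of the smooth function $q\mapsto \varrho(q,b)$ on the smooth hypersurface $S_a(r)=\{q:\varrho(a,q)=r\}$, so the Lagrange rule produces a unique multiplier $\lambda\in\RR$ with
\[
 \nabla_q\varrho(q,b)=\lambda\,\nabla_q\varrho(a,q).
\]
The same equation, with $\lambda$ of opposite sign, characterizes the maximizer $o_2(r)$ (internal osculation).

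I would then introduce the auxiliary map, which is $C^\infty$ by (D4),
\[
 G\colon (\RR^n\setminus\{a,b\})\times\RR\times(0,\infty)\to\RR^n\times\RR,\qquad G(q,\lambda,r):=\bigl(\nabla_q\varrho(q,b)-\lambda\,\nabla_q\varrho(a,q),\ \varrho(a,q)-r\bigr).
\]
Zeros of $G$ are exactly the Lagrange critical pairs, so the proposition reduces to showing that the partial Jacobian $\partial G/\partial(q,\lambda)$ is invertible at every such zero. Once this is verified, IFT yields a local $C^\infty$ solution $(q(r),\lambda(r))$, whose first component is $o_1(r)$ (respectively $o_2(r)$), proving smoothness at every $r$ with $o(r)\notin\{a,b\}$.

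The partial Jacobian is the standard bordered Hessian with diagonal block $H:=\nabla_q^2\varrho(q,b)-\lambda\,\nabla_q^2\varrho(a,q)$ and border $\nabla_q\varrho(a,q)$; it is invertible iff the restriction of $H$ to the common tangent hyperplane $T_qS_a(r)=T_qS_b(r_1)$ (with $r_1=\varrho(q,b)$) is non-degenerate. The geometric content of this restriction is precisely the difference of the second fundamental forms of $S_b(r_1)$ and $S_a(r)$ at their osculation point $q$, up to the positive factor $\|\nabla_q\varrho(a,q)\|$. By (D5b) and (D5d), in every tangent direction the normal curvature of the inner sphere strictly exceeds that of the outer one, so this difference is sign-definite, hence the bordered Hessian is invertible, and IFT applies.

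The main obstacle I anticipate is the last identification: showing that the restriction of $\nabla_q^2\varrho(p,q)$ to $T_qS_p(r)$, normalized by $\|\nabla_q\varrho(p,q)\|$, is the second fundamental form of $S_p(r)$ at $q$. This needs one to differentiate $\varrho(p,\cdot)=r$ twice along the hypersurface and track the contribution of the conormal length. One must also verify the signs so that both the external-osculation case ($o_1$, one sign of $\lambda$) and the internal-osculation case ($o_2$, the opposite sign) are simultaneously handled by (D5d), i.e.\ that the sign of $\lambda$ automatically aligns with the geometric ``inner vs.\ outer'' comparison. The exclusion of the generator points in the statement is intrinsic to the setup: at $q=a$ or $q=b$, condition (D4) only guarantees $C^0$ regularity of $\varrho$, so the gradients appearing in $G$ need not exist, and no IFT argument of this form can reach them.
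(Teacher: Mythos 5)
Your proposal is correct and follows essentially the same route as the paper's proof: both apply the implicit function theorem to the first-order criticality condition for $\varrho(\cdot,b)$ on $S_a(r)$ (your Lagrange equation is exactly the gradient-proportionality the paper writes as $\frac{\partial\varrho_a}{\partial x^i}=\lambda\frac{\partial\varrho_b}{\partial x^i}$), and both obtain the required non-degeneracy by identifying the second-order term with the difference of normal curvatures of $S_b$ and $S_a$ at the osculation point, which is sign-definite by (D5b) and (D5d) according to the sign of $\lambda$. The only difference is bookkeeping: the paper eliminates the constraint by working in polar coordinates $(\varphi^\alpha,r)$ adapted to $S_a(r)$ and verifies $\operatorname{rank}\frac{\partial^2\varrho_b}{\partial\varphi^\alpha\partial\varphi^\beta}=n-1$, whereas you retain the constraint and invert the bordered Hessian --- an equivalent formulation.
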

 
\begin{proof}
Consider a polar coordinate system $(\varphi^\alpha,r)$, $\alpha=1,\dots,n-1$ on $\RR^n(x)$ centered at $a$, so that $r(p)=\varrho(a,p)$. Then $(\varphi^\alpha,r_0)$ gives a coordinate system $(\varphi^\alpha)$ on $S_a(r_0)$ for each $r_0>0$. The osculation point $o(r_0)$ with coordinates $(\varphi_0,r_0)$ is a minimum point of $\varrho_b:=\varrho(\cdot, b)$ on $S_a(r_0)$. Then
\begin{equation}\label{eq:rhobd}
 \left( \frac{\partial\varrho_b}{\partial\varphi^\alpha}(\varphi,r_0)\right)_{\varphi_0} = \frac{\partial\varrho_b}{\partial\varphi^\alpha}(o(r_0))=0.
\end{equation}
We show that
\begin{equation}\label{eq:rank}
\operatorname{rank}\frac{\partial^2\varrho_b}{\partial \varphi^\alpha\partial\varphi^\beta}(o(r_0))=n-1.
\end{equation}
Since $\varrho_b(\varphi,r)$ is $C^\infty$ also in $r$, from the implicit function theorem we obtain that $\varphi^\alpha(o(r))=\varphi^\alpha(r)$ are $C^\infty$ solutions of \eqref{eq:rhobd}, and thus $o(r)=(\varphi^\alpha(r),r)$ is a $C^\infty$ curve. Indeed, on the one hand we have
\begin{equation}\label{eq:ddrb}
 \frac{\partial^2 \varrho_b}{\partial \varphi^\alpha \partial \varphi^\beta}
 =\frac{\partial^2\varrho_b}{\partial x^i\partial x^j}\frac{\partial x^i}{\partial \varphi^\alpha}\frac{\partial x^j}{\partial\varphi^\beta}
 +\frac{\partial\varrho_b}{\partial x^i}\frac{\partial^2 x^i}{\partial \varphi^\alpha \partial\varphi^\beta},
\end{equation}
where $(x^i)$ are coordinates on $\RR^n$. On the other hand (denoting $ r$ by $\varphi^n$), $\frac{\partial\varrho_a}{\partial\varphi^\alpha}=\frac{\partial\varphi^n}{\partial\varphi^\alpha}=0$, and hence
\begin{equation}\label{eq:ddra}
 0=\frac{\partial^2\varrho_a}{\partial x^i\partial x^j}\frac{\partial x^i}{\partial\varphi^\alpha}\frac{\partial x^j}{\partial\varphi^\beta}
 +\frac{\partial\varrho_a}{\partial x^i}\frac{\partial^2 x^i}{\partial\varphi^\alpha\partial \varphi^\beta}.
\end{equation}
Since $o(r_0)$ is an osculation point we have
\begin{equation}\label{eq:SaSbtan}
\frac{\partial\varrho_a}{\partial x^i}(o(r_0))=\lambda\frac{\partial\varrho_b}{\partial x^i}(o(r_0)) 
\end{equation}
for some nonzero $\lambda\in\RR$. In fact, from \eqref{eq:SaSbtan} we can express $\lambda$ as
\[
 \lambda = \operatorname{sgn}(\lambda)\frac{\|\grad\varrho_a\|}{\|\grad\varrho_b\|}(o(r_0)),
\]
which is negative if $o(r_0)$ is between $a$ and $b$, and positive if it is outside.
So substituting \eqref{eq:ddra} and \eqref{eq:SaSbtan} into \eqref{eq:ddrb} yields
\[
 \frac{\partial ^2 \varrho_b}{\partial\varphi^\alpha\partial\varphi^\beta}(o(r_0))
 =\|\grad\varrho_b\|\left(\frac{1}{\|\grad\varrho_b\|}\frac{\partial^2\varrho_b}{\partial x^i\partial x^j}
 -\frac{\operatorname{sgn}(\lambda)}{\|\grad\varrho_a\|}  \frac{\partial^2\varrho_a}{\partial x^i\partial x^j}\right)
 \frac{\partial x^i}{\partial\varphi^\alpha}\frac{\partial x^j}{\partial\varphi^\beta}(o(r_0))
 \]
From this we get
\[
 v^\alpha v^\beta\frac{\partial ^2 \varrho_b}{\partial\varphi^\alpha\partial\varphi^\beta}(o(r_0))=\|\grad\varrho_b\|(\kappa_b(v) -\operatorname{sgn}(\lambda)\kappa_a(v)),\quad \forall v\in T_{o(r_0)}S_a(r_0),
\]
where $\kappa_b(v)$ and $\kappa_a(v)$ denote the normal curvatures of $S_b(r_1)$ and $S_a(r_0)$ resp. at $o(r_0)$ in the direction $v$. If $r_0\in(0,\varrho(a,b))$, then $\lambda$ must be negative, and we obtain immediately that $v^\alpha v^\beta\frac{\partial ^2 \varrho_b}{\partial\varphi^\alpha\partial\varphi^\beta}(o(r_0))>0$. If $r_0<0$ or $r_0>\varrho(a,b)$, then $\lambda$ is positive, but D5(d) implies that $\kappa_b(v) >\operatorname{sgn}(\lambda)\kappa_a(v)$, so again we have $v^\alpha v^\beta\frac{\partial ^2 \varrho_b}{\partial\varphi^\alpha\partial\varphi^\beta}(o(r_0))>0$.
\end{proof}

One can see from section~\ref{sec:pre} and Definition~\ref{def:osc} that in a Finsler space $F^n=(\RR^n,\ir F)$ with strictly convex geodesic spheres osculation curves are Finsler geodesics. So in these $F^n$ the differentiability of the osculation curves is immediate.

\subsection{Quasigeodesics} Two arbitrary points $\bar a$, $\bar b$ of an osculation curve $o(r;a,b)$ determine again an osculation curve $o(r;\bar a,\bar b)$. But $o(r;\bar a,\bar b)\neq o(r;a,b)$ in general. Namely $o(r;a,b)$ is defined by the osculating $S_a(r)$ and $S_b(r^*(r))$, while $o[r;\bar a,\bar b]$ is defined by $S_{\bar a}(r)$ and $S_{\bar b}(r^{**}(r))$, which are independent of $S_a(r)$ and $S_b(r^*(r))$. In the following we mostly will investigate distance spaces with the good property

\begin{itemize}
  \item[(D6)] \emph{Any pair of points of an osculation curve as generator points determines the same osculation curve:} 
  $\bar a,\bar b\in o(r;a,b)\Rightarrow o(r;a,b)=o(r;\bar a,\bar b)$.  
\end{itemize}
\begin{definition}\label{eq:qged}
   A \emph{quasigeodesic} $q(r;a,b)$ is an osculation curve $o(r;a,b)$ such that	 $o(r;a,b)=o(r;\bar a,\bar b)$ for any $\bar a,\bar b\in o(r;a,b)$.
\end{definition}
In other words: an osculation curve $o(r;a,b)$ is a quasigeodesic $q(r;a,b)$ if and only if its points are generators of the same osculation curve. This condition is satisfied in any $F^n=(\RR^n,\ir F)$ with strictly convex geodesic spheres. In a distance space satisfying \mbox{(D1)--(D6)}, every osculation curve is a quasigeodesic, so we obtain
\begin{theorem}\label{thm:qC}
  In a distance space $D^n=(\RR^n,\varrho)$ having properties \mbox{(D1)--(D6)} through any pair of points $a,b\in\RR^n$ there exists a unique $C^\infty$ quasigeodesic $q(r;a,b)$. 
\end{theorem}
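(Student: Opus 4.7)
The plan is to take $q(r;a,b):=o(r;a,b)$, the osculation curve constructed in Section~\ref{ssec:osc}, and verify the three claims in turn: existence, uniqueness, and global $C^\infty$ smoothness.

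For \textbf{existence}, I would invoke the construction in Section~\ref{ssec:osc} directly. Strict convexity (D5a) together with (D5d) forces every osculating pair of distance spheres (from outside or from inside) to meet in a single point, so the formulas \eqref{eq:o1}, \eqref{eq:o2}, \eqref{eq:oo1o2} define $o(r;a,b)$ as a well-defined $1$-parameter point set for all $r\in(-\infty,\infty)$. By hypothesis (D6), every pair of its points regenerates the same curve, which is precisely the condition of Definition~\ref{eq:qged}; hence $o(r;a,b)$ is a quasigeodesic.

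For \textbf{uniqueness}, by Definition~\ref{eq:qged} any quasigeodesic through $a$ and $b$ is, as an underlying point set, an osculation curve of the form $o(r;a,b)$, and the construction of Section~\ref{ssec:osc} depends only on the data $(a,b)$. So $q(r;a,b)$ is unique.

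For \textbf{smoothness}, I would proceed in two steps. First, Proposition~\ref{prop:osc} already supplies $C^\infty$ smoothness at every point of the osculation curve other than its two generator points $a$ and $b$. Second, to repair smoothness at $a$ and at $b$, I would choose any two further points $\bar a,\bar b\in o(r;a,b)$ distinct from $a$ and $b$ (possible because the parameter $r$ ranges over all of $\RR$), invoke (D6) to conclude $o(r;a,b)=o(r;\bar a,\bar b)$, and apply Proposition~\ref{prop:osc} to this new representation: that yields $C^\infty$ smoothness at every point of the curve except $\bar a$ and $\bar b$, in particular at $a$ and at $b$. The one subtle point is precisely the smoothness at the original generator pair, which Proposition~\ref{prop:osc} excludes; closing this gap is exactly the role of (D6), which lets us freely swap generators. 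Once that reparametrization trick is in place, the rest of the argument is a direct consequence of the osculation construction and the strict convexity hypotheses (D5).
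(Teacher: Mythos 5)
Your proposal is correct and follows essentially the same route as the paper: existence and uniqueness from the osculation construction of Section~\ref{ssec:osc} together with (D6), smoothness away from the generators from Proposition~\ref{prop:osc}, and the generator-swapping trick via (D6) to obtain smoothness at $a$ and $b$. No gaps.
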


The existence and uniqueness of $q(r;a,b)$ follows from section~\ref{ssec:osc} and (D6). Proposition~\ref{prop:osc} assures the differentiability of $q(r;a,b)$ except $a$ and $b$. However, we can choose any other two points $\bar a$ and $\bar b$ of $q(r;a,b)$, and they will generate the same quasigeodesic. Then Proposition~\ref{prop:osc} tells us that $q(r;a,b)$ is differentiable everywhere but $\bar a$ and $\bar b$. So $q(r;a,b)$ is differentiable also at $a$ and $b$.
Also it follows from section \ref{ssec:osc} that two quasigeodesics emanating from a point can not have other common points. The quasigeodesics emanating from a point cover $\RR^n$ one folded.

We show some properties of the quasigeodesics. Denote by $b$ the points of $S_a(r)$. $q(r;a,b)=q(r,b)$ are smooth quasigeodesics between $a$ and $b$ with $q(0,b)=a$, $q(1,b)=b$. Then $\frac{\partial}{\partial r}q(a,b)=\dot q(r,b)$ is the tangent vector field of the quasigeodesics $q(r,b)$. We know that $q(r_0,b)\in S_a(r_0)$, $\forall r_0\in\left(0,1\right]$. Hence there exists also $\frac{\partial }{\partial b}q(r,b)$. However we do not know whether the partial derivatives $\frac{\partial}{\partial r}q(r,b)$ and $\frac{\partial}{\partial b}q(r,b)$ are continuous both in $r$ and $b$. 
[We suppose $C_1$: the vector field $\dot q(r,b)$ is continuous on $[0,1]\times S_a(r)$, and $C_2$: from a point in a direction emanates at most one quasigeodesic. (This is satisfied by any Finsler space.)]

\begin{theorem}\label{thm:q}
  Under condition $C_1$ and $C_2$ in a distance space $D^n=(\RR^n,\varrho)$ with properties (D1)--(D6)
  \begin{enumerate}[label=(\alph*)]
   \item from any point $p_0$ in any direction $y$ emanates exactly one quasigeodesic
   \item the set of all quasigeodesics emanating from a point $p_0$ is diffeomorphic to the set of the rays out of the origin of a Euclidean space $E^n$
  \end{enumerate}
\end{theorem}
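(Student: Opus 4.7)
Fix a small $r_0>0$, so that by (D5c) the distance sphere $S_{p_0}(r_0)$ is a compact manifold diffeomorphic to the Euclidean unit sphere $S^{n-1}$. For each $b\in S_{p_0}(r_0)$, Theorem~\ref{thm:qC} produces a unique quasigeodesic $q(\cdot\,;p_0,b)$, and condition $C_1$ guarantees that its initial tangent $\dot q(0;p_0,b)$ exists as a nonzero element of $\RR^n$ and depends continuously on $b$. Normalising, I would define
\[
 \Phi\colon S_{p_0}(r_0)\longrightarrow S^{n-1},\qquad \Phi(b):=\dot q(0;p_0,b)/\|\dot q(0;p_0,b)\|,
\]
where $S^{n-1}$ is the Euclidean unit sphere of directions at $p_0$. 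Then $\Phi$ is continuous by $C_1$ and injective by $C_2$, since two quasigeodesics leaving $p_0$ with a common initial direction would coincide by $C_2$, and hence share their first intersection point with $S_{p_0}(r_0)$.

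For part (a), I claim $\Phi$ is onto. Being a continuous injection from a compact space to a Hausdorff one, $\Phi$ is a homeomorphism onto its image. Invariance of domain (the domain and target are topological manifolds of the common dimension $n-1$) makes $\Phi(S_{p_0}(r_0))$ open in $S^{n-1}$; compactness of the domain makes it closed; and connectedness of $S^{n-1}$ forces it to exhaust $S^{n-1}$. Thus every direction $y$ at $p_0$ is realised as the initial direction of exactly one $q(\cdot\,;p_0,b)$, which together with $C_2$ gives (a).

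For part (b), the set of rays issuing from the origin of $E^n$ is canonically $S^{n-1}$, while by section~\ref{ssec:osc} together with (D6) each quasigeodesic through $p_0$ meets $S_{p_0}(r_0)$ in a unique generator point, so the set of such quasigeodesics is canonically $S_{p_0}(r_0)$. The map $\Phi$ realises the bijection between these two parametrisations, and (D5c) equips $S_{p_0}(r_0)$ with a $C^\infty$ structure in which it is diffeomorphic to $S^{n-1}$. To upgrade $\Phi$ itself to a diffeomorphism I would revisit the implicit function argument of Proposition~\ref{prop:osc}, now treating $b$ as an additional smooth parameter: the non-degeneracy~\eqref{eq:rank} is open in the data, so the solution $\varphi^\alpha(r,b)$ of~\eqref{eq:rhobd} is jointly $C^\infty$ in $(r,b)$, and hence so is $\Phi$. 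The strict convexity package (D5a)--(D5d) should then force $d\Phi$ to be everywhere non-singular, and the inverse function theorem delivers smoothness of $\Phi^{-1}$.

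The main obstacle, in my view, is not the topological surjectivity step, which is essentially formal, but the upgrade from a continuous bijection to a diffeomorphism: propagating the implicit-function argument of Proposition~\ref{prop:osc} to a smooth dependence on the parameter $b$, and genuinely using the convexity and curvature conditions (D5a)--(D5d) to exclude tangent degeneracies of $\Phi$ as $b$ varies over $S_{p_0}(r_0)$.
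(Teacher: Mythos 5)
Your part (a) is essentially the paper's own argument: the paper introduces the map $\mu^{-1}\colon S_a(r)\to Y\subset S^{n-1}$, $b\mapsto y_b$ (your $\Phi$), gets injectivity from $C_2$ together with Theorem~\ref{thm:qC}, continuity from $C_1$, uses compactness of the domain to invert it, and then rules out $Y\subsetneq S^{n-1}$ by the remark that a proper compact part of $S^{n-1}$ would have nonempty boundary while its homeomorphic image $S_a(r)$ has none --- which is exactly your invariance-of-domain step stated informally; your version is the more careful of the two. For part (b) you diverge: you read the statement as asking for a diffeomorphism between the two parameter spaces, $S_{p_0}(r_0)$ (quasigeodesics through $p_0$) and $S^{n-1}$ (rays), realised by $\Phi$, whereas the paper constructs an ambient map $\RR^n\to E^n$, $p\mapsto(r,\varphi_0)$, with $r=\varrho(p_0,p)$ and $\varphi_0$ the polar angle of the initial direction of the quasigeodesic through $p_0$ and $p$, and asserts that this is a global diffeomorphism carrying each quasigeodesic onto a ray; this stronger ambient form is what the paper actually uses later in the perpendicularity discussion. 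Your reading is the more literal one, and your $\Phi$ is essentially the restriction of the paper's map to a sphere. Neither route, however, genuinely proves the smoothness: the paper simply declares its polar map ``1:1 and differentiable,'' and you defer to a planned parametric version of Proposition~\ref{prop:osc} plus an unproved claim that (D5) makes $d\Phi$ nonsingular. If you carry your plan out, two points need explicit attention: the implicit function theorem argument behind \eqref{eq:rank} and \eqref{eq:rhobd} with $b$ as a parameter gives joint smoothness of $(r,b)\mapsto o(r;p_0,b)$ only away from the generator points, while $\Phi$ is evaluated at the generator point $r=0$, so the (D6) re-generation trick must be run with auxiliary generators depending smoothly on $b$; and nonsingularity of $d\Phi$ does not follow from $\Phi$ being a smooth homeomorphism (consider $x\mapsto x^3$), so the appeal to (D5a)--(D5d) requires an actual computation. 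You have correctly located these as the obstacles; the paper leaves the same gaps unaddressed.
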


\begin{proof}
(a) Let us denote $\dot q(0,b)=\dot q_b$, and consider the map 
\[
 \mu^*\colon\{\dot q_b\mid b\in S_a(r)\}=: Q\longrightarrow S_a(r),\quad \dot q_b\mapsto b.
\]
Let us endow $T_a\RR^n$ with a Euclidean metric, and let $y_b$ be a Euclidean unit tangent vector of $q(r,b)$ at $a$. Then
\[
 \{y_b\mid b\in S_a(r)\}=:Y\subset S^{n-1}\subset E^n,
\]
where $S^{n-1}$ is the Euclidean unit sphere. The map
\[
 \mu\colon Y\to S_a(1),\quad y_b\mapsto b
\]
is univalent by $C_2$, and it is 1:1, for it is defined on the whole $Y$, and from $y_{b_1}\neq y_{b_2}$ follows $b_1\neq b_2$, namely otherwise the two quasigeodesics emanating from $a$ in the directions of $y_{b_1}$ and $y_{b_2}$ would intersect each other at $b_1=b_2$, what contradicts to Theorem~\ref{thm:qC}. Then there exists
\[
 \mu^{-1}\colon S_a(r)\longrightarrow Y,\quad b\mapsto y_b,
\]
which is also 1:1 and by $C_1$ it is continuous. Then also $\mu$ is continuous, for it is the inverse of the 1:1 and continuous map $\mu^{-1}$ on the compact $S_a(1)$. Then $Y$ and $S_a(1)$ are homeomorphic. We know that every $y_b$ is a Euclidean unit vector. Thus $Y\subset S^{n-1}$. If $Y$ is a proper part of $S^{n-1}$, then it has a non empty boundary, while $\mu Y=S_a(r)$ has not. In case of a homeomorphism this is not possible. Therefore $Y=S^{n-1}$. This yields the statement of (a).

(b) Let $q$ be a quasigeodesic of $D^n=(\RR^n,\varrho)$ emanating from $p_0$ and having a tangent $\dot q$ at $p_0$. Since a quasigeodesic emanating from $p_0$ has a single common point with each distance sphere $S_{p_0}(r)$, $r\in\RR_+$, $r$ can be a parameter of $q$: $q=q(r)$. Then the points of $q=q(r)\subset\RR^n$ can be represented  by the triples $(r;p_0,\dot q)$. Let us equip $T_{p_0}\RR^n$ again with a Euclidean metric having a polar coordinate system $(r,\varphi)\colon T_{p_0}\RR^n\cong E^n(r,\varphi)$. Since from $p_0$ in every direction emanates exactly one quasigeodesic, $\dot q$ can be replaced by $\varphi$. Thus $q(r)=(r;p_0,\dot q)=(r;p_0,\varphi_0)$. Then the 1:1 and differentiable mapping
\[
 \RR^n\supset q(r)=(r;p_0,\varphi_0)\to (r,\varphi_0)\subset E^n
 \]
is a diffeomorphism between $\RR^n$ and $E^n$, and it takes every quasigeodesic $q(r)=q(r;p_0,\varphi_0)$ emanating from $p_0$ into a ray of $E^n$, and conversely.
\end{proof}
\subsection{Another defining property of the quasigeodesics; perpendicularity}

We give still another equivalent definition of the quasigeodesics. In section \ref{ssec:Finsplane} we have proved that a Finsler geodesic consists of the centers of those geodesic spheres, which tangent a plane $\varSigma_{p_0}$ at its point $p_0$. Replacing geodesic spheres by distance spheres of our $D^n$, and Finsler geodesics by quasigeodesics of $D^n$, we can proceed just in the same way as we did it in section~\ref{ssec:Finsplane}. This rectifies the statement that the points of a quasigeodesic $q(r)$, $q(0)=p_0$ are centers of distance spheres osculating a hyperplane $\varSigma_{p_0}$ through $p_0$.

We claim that points not belonging to $q(r)$ do not have this property, and hence the above statement characterizes the quasigeodesics of a $D^n=(\RR^n,\varrho)$. In order to prove this, consider a point $c$ outside of $q(r)$: $c\notin q(r)$, and suppose that yet $S_c(r_1)$, $r_1=\varrho(c,p_0)$ tangents $\varSigma_{p_0}$ at $p_0$: $T_{p_0}S_c(r_1)=\varSigma_{p_0}$. Let $\hat q(r)$, $\hat q(0)=p_0$ be the quasigeodesic through $c$ and $p_0$. The distance spheres $S_{p_0}(r)$, $r>0$ intersect the two quasigeodesics at $q(r)$ and $\hat q(r)$ resp. Their tangent planes at these points are $T_{q(r)}S_{p_0}(r)$ and $T_{\hat q(r)}S_{p_0}(r)$ resp. with the property
\[
 \lim_{r\to0} T_{q(r)}=\lim_{t\to0} T_{\hat q(r)}S_{p_0}(r)=\varSigma_{p_0}.
\]
Let us perform the diffeomorphism $\RR^n\to E^n$ applied in the proof of Theorem~\ref{thm:q}(b). Then the distance spheres $S_{p_0}(r)$ will be taken into Euclidean spheres $S^{n-1}(r)\subset E^n$, and the quasigeodesics $q(r)$ and $\hat q(r)$ go over into rays $\lambda(r)$ and $\hat\lambda(r)$ resp.\ of $E^n$. The images of $T_{q(r)}S_{p_0}(r)$ and $T_{\hat q(r)}S_{p_0}(r)$  will be hyperplanes $\psi_{\lambda(r)}$ and $\hat\psi_{\hat\lambda(r)}$ perpendicular to $\lambda(r)$ and $\hat\lambda(r)$ resp. They have the same position $\psi$ and $\hat\psi$ along $\lambda(r)$ and $\hat\lambda(r)$ resp.  Then, because of the diffeomorphism $\RR^n\to E^n$ we would obtain
\[
 \lim_{r\to 0}\psi_{\lambda(r)}=\psi=\lim_{r\to 0}\hat\psi_{\hat\lambda(r)}=\hat\psi.
\]
But $\psi\neq\hat\psi$, for they are perpendicular to two different rays $\lambda(r)$ and $\hat\lambda(r)$ resp. So our supposition cannot be true, that is the centers of the distance spheres osculating $\varSigma_{p_0}$ at $p_0$ lie on the quasigeodesic $q(r)$. Thus we obtain for the quasigeodesics another equivalent 

\begin{definition}\label{df:qsigma}
  A quasigeodesic $q(r)$ of a $D^n=(\RR^n,\varrho)$ with properties $(D1)--(D6)$ consists exactly of the centers of those distance spheres, which tangent a hyperplane $\varSigma_{p_0}$ at its point $p_0$.
\end{definition}
Definition~\ref{df:qsigma} also yields that to the tangent $\dot q(0)$ of a quasigeodesic $q(r)$ with $q(0)=p_0$ there exists a hyperplane $\varSigma_{p_0}$. We call $\varSigma_{p_0}$ perpendicular (in the sense of $D^n=(\RR^n,\varrho)$) to $\dot q(0)$. This perpendicularity holds in Finsler spaces $F^n=(\RR^n,\ir F)$, but it lacks more good properties of the perpendicularity in Euclidean space.

In a $D^2=(\RR^2,\varrho)$ two not intersecting quasigeodesics can be said \emph{parallel} (in the sense of $D^2$). We show that to any quasigeodesic $q(t)$ through any point $p\notin q(t)$ there exists at least one quasigeodesic parallel to $q(t)$.

Let us apply the diffeomorphism used in the proof of Theorem~\ref{thm:q}(b). Then the half quasigeodesics emanating from $p$ and passing through $q(t)$ are rays $r(\tau,t)$ with parameter $\tau$, such that $r(0,t)=p$. Let us denote
\[
 \lim_{t\to-\infty}r(\tau,t)=r_-(\tau)\quad\mbox{and}\quad \lim_{t\to+\infty} r(\tau,t)=r_+(\tau).
\]
If the Euclidean angle
\[
 \alpha=\sphericalangle(\dot r_+(0),\dot r_-(0))\geq\pi,
\]
then there exists an $r(\tau,t_0)$, such that also its extension in the opposite direction $r^*(\tau,t_0)$ meets $q(t)$, and thus the full quasigeodesic $r(\tau,t_0)\cup r^*(\tau,t_0)$, which is a straight line, meets $q(t)$ twice, which contradicts to Theorem~\ref{thm:qC}. So we obtain that $\alpha\leq\pi$. But in this case there exists a straight line, i.e., a quasigeodesic through $p$, which does not meet $q(t)$.

\section{Relation to Finsler spaces}\label{sec:Fins}
\setcounter{unit}0

\subsection{$F^n$ determined by $D^n$}
Condition (F3) implies the strict convexity of the indicatrices. In the following theorem we will call an $F^n=(\RR^n,\ir F)$ a \emph{weak Finsler space} if instead of (F3) the indicatrix balls are only convex.
We formulate a condition for a distance space $D^n=(\RR^n,\varrho)$ under which we can construct a weak Finsler space $F^n=(\RR^n,\ir F)$ from $D^n$. 

Consider a polar coordinate system $(\varphi^\alpha,r)$ on $\RR^n$, such that $r$ is the Euclidean distance from the origin. Since all the points of $\RR^n\times\RR^n\setminus\ir D$ (where $\ir D$ is the diagonal $\{(x,y)\in\RR^n\times \RR^n\mid x= y\}$) can be written uniquely in the form \mbox{$(x,x+ry)$}, $x\in\RR^n$, $y\in S^{n-1}$, $r>0$, we obtain a coordinate system on $\RR^n\times\RR^n\setminus\ir D$ by setting
\begin{equation}\label{eq:sphcoord}
 (x,x+ry)\longmapsto (x^i,\varphi^\alpha(y),r).
\end{equation}
So we may use $(x^i,\varphi^\alpha,r)$ as a coordinate system on $\RR^n\times\RR^n\setminus\ir D$. In this setting, $r$ becomes the Euclidean distance $(p,q)\mapsto \|p-q\|$.

Our further condition on $D^n=(\RR^n,\varrho)$ is that
\begin{itemize}
\item[(D7)] \emph{in all coordinate systems of the form \eqref{eq:sphcoord}, all the partial derivatives of $\frac\varrho r$ with respect to $x^i$, $\varphi^\alpha$ and $r$ are bounded on a neighbourhood of $\ir D$.}
\end{itemize}

\begin{theorem}
\label{thm:2}
 A distance space $D^n=(\RR^n,\varrho)$ with properties (D1)--(D7) determines a weak Finsler space $F^n=(\RR^n,\ir F)$ by
 \begin{equation}\label{eq:distF}
  \ir F(x,y)=\lim_{r\to0^+}\frac1r \varrho(x,x+ry). 
 \end{equation}
\end{theorem}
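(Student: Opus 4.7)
The plan is to check that $\ir F$ defined by \eqref{eq:distF} satisfies (F1), the absolute homogeneity (F2a), and the weakening of (F3) asked in the theorem (convexity of the indicatrix balls in place of a positive definite Hessian of $\ir F^2$). Set $f(x,y,r):=\varrho(x,x+ry)/r$, regarded as a function of $(x^i,\varphi^\alpha,r)$ via the coordinates \eqref{eq:sphcoord}. For $r>0$, $f$ is $C^\infty$ by (D4); the uniform bounds on all partial derivatives of $f$ provided by (D7) let me extract a $C^\infty$ extension of $f$ to the diagonal $r=0$. Setting $\ir F(x,y):=f(x,y,0)$ for $y\in S^{n-1}$ and extending by positive homogeneity yields a function on $TM=\RR^n\times\RR^n$ that is continuous everywhere and $C^\infty$ on $TM\setminus\{0\}$, which is (F1).

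The positive homogeneity half of (F2a) is immediate from the definition via the substitution $s=\lambda r$, and reversibility follows from (D2): writing $\varrho(x,x-ry)=\varrho(x-ry,x)$ and setting $x':=x-ry$, joint continuity of $\ir F$ in its first argument from the previous step gives
\[
\ir F(x,-y)=\lim_{r\to 0^+}\frac{\varrho(x',x'+ry)}{r}=\ir F(x,y).
\]
For strict positivity of $\ir F$ on $y\neq 0$, I would use that the distance ball $B_x(r)$ is compact, contains $x$ in its interior, and, by the smoothness hypotheses together with (D5c), has Euclidean radius of order $r$; hence the rescaled set $r^{-1}(B_x(r)-x)$ stays in a bounded Euclidean region. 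Its Hausdorff limit is the indicatrix ball $\{y\mid\ir F(x,y)\leq 1\}$, which is therefore bounded and forces $\ir F(x,y)>0$ whenever $y\neq 0$.

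For the weakened (F3), observe that for each $r>0$ the set $B_r:=\{y\in\RR^n\mid \varrho(x,x+ry)\leq r\}$ is the image of the distance ball $\{q\mid \varrho(x,q)\leq r\}$ under the affine bijection $q\mapsto (q-x)/r$; hence $B_r$ is convex, since distance balls are convex by the strict convexity (D5a) of their boundaries. The local uniformity (again coming from (D7)) of the convergence $f(x,\cdot,r)\to\ir F(x,\cdot)$ on compact subsets upgrades to Hausdorff convergence $B_r\to\{y\mid \ir F(x,y)\leq 1\}$, and Hausdorff limits of convex sets are convex; so the indicatrix ball is convex. Strict convexity may fail in the limit, which is precisely why only a weak Finsler space is obtained.

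The main obstacle is the smooth-extension step in the first paragraph: extracting a $C^\infty$ extension of $f$ to $r=0$ from the coordinate-wise boundedness stipulated by (D7), in a way that is compatible across the coordinate charts indexed by base point $x$ and direction $y\in S^{n-1}$. Once smoothness of $\ir F$ away from the zero section is in hand, every subsequent step — homogeneity, positivity, and weak convexity — is routine.
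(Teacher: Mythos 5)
Your proposal follows essentially the same route as the paper: the paper likewise sets $h(x,y,r)=\frac1r\varrho(x,x+ry)$ in the coordinates \eqref{eq:sphcoord}, uses (D7) to bound all partial derivatives near $r=0$ and invokes Seeley's extension theorem to obtain the smooth extension across $r=0$ (precisely the step you flag as the main obstacle, which the paper settles by citation), and derives convexity of the indicatrix balls by passing the convexity of the rescaled distance balls to the limit --- phrased there as a pointwise limit of the convex gauge functions $\frac1r f_r(x_0;\cdot)$ rather than as a Hausdorff limit of sets. Your extra verifications of reversibility via (D2) and of strict positivity do not appear in the paper's proof, but they are consistent with it and do not change the approach.
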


\begin{proof}
The positive homogeneity of $\ir F$ is clear, because it is a directional derivative. To prove that it is $C^\infty$, define a function $h$ by
\[
 h(x,y,r):=\frac1r\varrho (x,x+ry).
\]
where $x\in\RR^n$, $y\in S^{n-1}$ and $r>0$. Then $h$ is smooth, and if we consider $(\varphi^\alpha)$ as coordinates on $S^{n-1}$, (D7) implies that $h$ has bounded partial derivatives as $r$ approaches $0$.  Thus its partial derivatives have continuous extensions to $\RR^n\times S^{n-1}\times \{r\in\RR\mid r\geq 0\}$. This implies that $h$ has a smooth extension $\bar h$ to $\RR^n\times S^{n-1}\times\RR$ (see \cite{seeley1964extension}). Therefore the limit
\[
 \lim_{r\to 0} h(x,\varphi^\alpha(y),r)=\lim_{r\to 0^+}\frac 1r\varrho(x,x+ry)=:\ir F(x,y)
\]
exists and depends smoothly on $x\in \RR^n$ and $y\in S^{n-1}$. Thus $\ir F$ is smooth on $\RR^n\times S^{n-1}$. It is also positive homogeneous so it is smooth on $T\RR^n\setminus\{0\}\cong \RR^n\times\RR^n\setminus\ir D$.

Finally we show that the indicatrix balls of $\ir F$ are convex. For each $x_0$ and $r>0$, there exists on $\RR^n$ a unique nonnegative and positive homogeneous function $f_r(x_0;y)$ such that its indicatrix $\{y\in\RR^n\mid f_r(x_0,y)=r\}$ is $S_{x_0}(r)$ From the homogeneity of $f_r$ we have $\lim_{r\to 0^+}\frac1r f_r(x_0;y)=\ir F(x_0,y)$. $\frac1rf_r(x_0,y)$ is convex, hence so is $\ir F(x_0,y)$. This is true for any $x_0\in\RR^n$.
\end{proof}

\begin{remark}\label{rmrk:rhoparc} 
All partial derivatives of $\ir F$ can be obtained as limits of partial derivatives of $\varrho$. First, notice that we may define $\ir F$ also by
 \begin{equation}\label{eq:Fdrho}
   \ir F(x,y)=\lim_{t\to0^+}\frac{\partial\varrho}{\partial r}(x,x+ty).
 \end{equation}
 Indeed, using Taylor's formula one can show that if $f$ is a smooth real function with $f(0)=0$, and $g(t)=f(t)/t$, then
 \begin{equation}\label{eq:pert}
  \lim_{t\to0^+}g^{(k)}(t)=\lim_{t\to0^+}\frac1{k+1}f^{(k+1)}(t).
 \end{equation}
 Setting $f(t):=\varrho(x,x+ty)$ the relation \eqref{eq:Fdrho} follows. 
 
 Similar relations can be obtained for the further derivatives of $\ir F$. First, we have
 \[
  \frac 1r\frac{\partial^l\varrho}{\partial\varphi^{\alpha_1}\dots \partial\varphi^{\alpha_l}}=\frac{\partial^l\varrho/r}{\partial\varphi^{\alpha_1}\dots \partial\varphi^{\alpha_l}},\quad l\in\{0,1,2,\dots\}.
 \]
 (D7) provides that the right-hand side is bounded, hence $\frac{\partial^l\varrho}{\partial\varphi^{\alpha_1}\dots \partial^{\alpha_l}}$ tends to zero along any series $(x_n,y_n)\to (x,x)\in\ir D$. Using \eqref{eq:pert} again, we get
 \begin{align*}
  \lim_{t\to0^+}\frac{\partial^{k+l}\varrho}{\partial\varphi^{\alpha_1}\dots \partial\varphi^{\alpha_l}\partial r^k}(x,x+ty)&=k\lim_{t\to0^+}\frac{\partial^{k+l-1}\varrho/r}{\partial\varphi^{\alpha_1}\dots \partial\varphi^{\alpha_l}\partial r^{k-1}}(x,x+ty)\\
  &=k\frac{\partial^{k+l-1}\ir F}{\partial\varphi^{\alpha_1}\dots \partial\varphi^{\alpha_l}\partial r^{k-1}}(x,y)
 \end{align*}
 for all $(x,y)\in\RR^n\times S^{n-1}$. Due to the smoothness of $\bar h$ used in the proof of Theorem~\ref{thm:2}, these limits remain true along any series $(x_n,y_n,t_n)\in\RR^n\times S^{n-1}\times\RR_+^*$ tending to $(x,y,0)$:
 \[
  \lim_{n\to\infty}\frac{\partial^{k+l}\varrho}{\partial\varphi^{\alpha_1}\dots \partial\varphi^{\alpha_l}\partial r^k}(x_n,x_n+t_ny_n)=k\frac{\partial^{k+l-1}\ir F}{\partial\varphi^{\alpha_1}\dots \partial\varphi^{\alpha_l}\partial r^{k-1}}(x,y).
 \]
\end{remark}

\begin{remark}
The Finsler space $F^n=(\RR^n,\ir F)$ given by \eqref{eq:distF} was derived from a $D^n=(\RR^n,\varrho)$. By \eqref{eq:Fdist}, this $F^n$ determines again a distance function $\varrho^F$, whose value along the Finsler geodesic $g(t)$ is the Finsler arc length:
\begin{equation}
 s_F(g(t))=\varrho^F(g(t_0),g(t)):=\int_{t_0}^t \ir F(g(\tau),\dot g(\tau)) d\tau.
\end{equation}
From this we obtain
\[
 \left.\frac d{dt}\right|_{t_0}\varrho^F(g(t_0),g(t))=\ir F(g(t_0),\dot g(t_0)).
\]
Thus
\[
 \varrho\Rightarrow \ir F\Rightarrow \varrho^F\Rightarrow\ir F\Rightarrow\dots
\]
However the distance function $\varrho$ of the $D^n=(\RR^n,\varrho)$, from which $\ir F$ was derived by \eqref{eq:distF} differs in general from $\varrho^F$ obtained from $\ir F$ by \eqref{eq:Fdist}, i.e., $\varrho\neq\varrho^F$ in general (see \cite{tamassy2008relation}). The reason for this is that in the construction of $\ir F(x,y)$ we used $\varrho(x_0,x)$ in an arbitrary small neighbourhood of $x_0$, in a germ $G_{x_0}$ (see \cite{tamassy2008relation}). So for two $D^n=(\RR^n,\varrho)$ and $\bar D^n=(\RR^n,\bar\varrho)$, $\ir F(x_0,y)=\bar{\ir F}(x_0,y)$ holds if $\varrho(x_0,x)$ and $\bar\varrho(x_0,x)$ equal in $G_{x_0}$, but $\varrho(x_0,x)$ and $\bar\varrho(x_0,x)$ may differ outside $G_{x_0}$. So in this case $\ir F(x_0,y)=\bar{\ir F}(x_0,y)$ $\forall x_0\in\RR^n$ but $D^n\neq \bar D^n$ (see also \cite{tamassy2013some}).
\end{remark}

\subsection{Arc length in $D^n$}
In a distance space one can define arc length $s_D(x(t))$ in the same way as in a Finsler space:
\begin{equation}\label{eq:rhodist}
 s_D(x(t)):=\lim_{\max d_it\to0}\sum\varrho(x(t_i),x(t_{i+1})),\quad d_it=t_{i+1}-t_i.
\end{equation}
(see also \eqref{eq:arcintdx}). Then we have the following

\begin{theorem}\label{thm:3}
 In a distance space $D^n=(\RR^n,\varrho)$ with properties (D1)--(D7) the arc length $s_D(x(t))$ of a curve $x(t)$ equals the Finsler arc length $s_F(x(t))$ of $x(t)$ in the weak Finsler space $F^n=(\RR^n,\ir F)$, where $\ir F$ is given by \eqref{eq:distF}.
\end{theorem}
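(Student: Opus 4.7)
The plan is to use the smooth extension $\bar h$ of the function $h(x,y,r)=\varrho(x,x+ry)/r$ constructed in the proof of Theorem~\ref{thm:2}, which yields a first-order expansion of $\varrho$ near the diagonal $\ir D$. Since $\bar h(x,y,0)=\ir F(x,y)$ and $\bar h$ is $C^\infty$ on $\RR^n\times S^{n-1}\times\RR$, Taylor's theorem in the variable $r$ gives
\[
 \varrho(p,p+ry)=r\,\ir F(p,y)+r^2\,R(p,y,r),
\]
with $R$ continuous; over any compact subset $K\subset\RR^n$ and for $y\in S^{n-1}$, $R$ is bounded, say $|R|\le C_K$.

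Next I would pass from this pointwise estimate to an estimate along the curve $x\colon[\alpha,\beta]\to\RR^n$. Since $x$ is $C^1$ (enough for the Finsler arc-length integral to make sense), the image and the range of $\dot x$ lie in a compact set. For a partition $\alpha=t_0<t_1<\dots<t_N=\beta$, set $p_i=x(t_i)$, $h_i=d_it=t_{i+1}-t_i$ and apply Taylor's theorem to $x$ at $t_i$: $p_{i+1}-p_i=\dot x(t_i)h_i+\xi_i$ with $\|\xi_i\|\le Mh_i^2$ uniformly. Writing $r_i:=\|p_{i+1}-p_i\|$ and $y_i:=(p_{i+1}-p_i)/r_i\in S^{n-1}$, one gets $r_i=\|\dot x(t_i)\|h_i+O(h_i^2)$ and $y_i=\dot x(t_i)/\|\dot x(t_i)\|+O(h_i)$. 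Plugging into the expansion of $\varrho$ and using positive homogeneity $\ir F(p,\lambda y)=\lambda\ir F(p,y)$ and continuity of $\ir F$ on $TM\setminus\{0\}$, we obtain
\[
 \varrho(p_i,p_{i+1})=\ir F(x(t_i),\dot x(t_i))\,h_i+O(h_i^2),
\]
where the implied constant is uniform in $i$ by the compactness arguments above.

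Summing over $i=0,\dots,N-1$ yields
\[
 \sum_{i=0}^{N-1}\varrho(p_i,p_{i+1})=\sum_{i=0}^{N-1}\ir F(x(t_i),\dot x(t_i))\,h_i+O\!\left(\sum_{i=0}^{N-1}h_i^2\right).
\]
The error term is bounded by a constant times $(\max_i h_i)(\beta-\alpha)$ and hence tends to zero as $\max_ih_i\to 0$. The main sum is a Riemann sum for the continuous integrand $t\mapsto\ir F(x(t),\dot x(t))$, so it converges to $\int_\alpha^\beta \ir F(x(t),\dot x(t))\,dt=s_F(x(t))$. By the definition \eqref{eq:rhodist} of $s_D$, this proves $s_D(x(t))=s_F(x(t))$.

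The main technical obstacle is securing uniform (in $i$) control of the remainder $R(p,y,r)$ and of the Taylor remainder for $x(t)$. Both are handled by compactness: the image of $x$ lies in a compact $K\subset\RR^n$, the unit tangent directions $y_i$ lie in the compact $S^{n-1}$, and the parameters $r_i$ stay in a bounded interval, so the smoothness of $\bar h$ provided by (D7) and Seeley's extension theorem (already invoked in Theorem~\ref{thm:2}) gives a single bound $C_K$ that controls the sum of the quadratic errors.
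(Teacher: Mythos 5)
Your argument is correct, and it reaches the same two-step skeleton as the paper (a first-order expansion of $\varrho$ on each subinterval with a uniformly controlled remainder, followed by a Riemann-sum passage to the integral), but the way you obtain the expansion is genuinely different. The paper Taylor-expands the one-variable function $t\mapsto\varrho_i(t)=\varrho(x(t_i),x(t))$ in the curve parameter, which forces it to compute $\varrho_i'$ and $\varrho_i''$ by the chain rule through the polar coordinates $(\varphi^\alpha,r)$, to invoke Remark~\ref{rmrk:rhoparc} for the limits $\frac{\partial\varrho}{\partial\varphi^\alpha}\to0$, $\frac{\partial\varrho}{\partial r}\to\ir F$ and for the boundedness of the second partials, and to run a separate reparametrization argument (via $u(t)=\frac1t x(t)$ and \eqref{eq:pert}) showing that the polar components of the curve have bounded first and second derivatives. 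You instead Taylor-expand the extension $\bar h$ in the radial variable $r$ at $r=0$, which yields the curve-independent two-point estimate $\varrho(p,p+ry)=r\,\ir F(p,y)+r^2R(p,y,r)$ and cleanly decouples the two error sources: the deviation of $\varrho$ from $\ir F$ near the diagonal, and the deviation of the chord $p_{i+1}-p_i$ from the tangent $\dot x(t_i)h_i$. This buys you a shorter and more transparent argument that sidesteps the chain-rule bookkeeping and most of Remark~\ref{rmrk:rhoparc}, at the cost of leaning directly on the Seeley extension $\bar h$. Two small points to tidy up: (i) you assert that $C^1$ regularity of $x$ suffices but then write the curve's Taylor remainder as $O(h_i^2)$, which needs $x\in C^2$ (or a Lipschitz derivative); with mere $C^1$ the remainder is only uniformly $o(h_i)$, which is still enough for the sum of errors to vanish, so state it that way or assume $x$ smooth as the paper implicitly does; (ii) the definition of $y_i$ and the continuity argument for $\ir F(p_i,\dot x(t_i)+\xi_i/h_i)$ require $\dot x$ bounded away from zero on $[\alpha,\beta]$, an implicit regularity hypothesis the paper also makes (it assumes $x'(0)\neq0$), so it is worth making explicit.
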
 

\begin{proof}
 For simplicity we assume that $t\in [0,1]$. We estimate the arc length with the subdivision $t_i=\frac iN$, $i\in\{0,\dots,N\}$ of $[0,1]$. Let $\varrho_i(t):=\varrho(x(t_i),x(t))$. Using Taylor's formula:
 \begin{align*}
  \varrho(x(t_i),x(t_{i+1}))&=\varrho_i(t_{i+1})=\varrho_i(t_i)+\frac1N\varrho_i'(t_i)+\frac12\frac 1{N^2}\varrho_i''(t_i^*),\quad t_i^*\in[t_i,t_{i+1}].
 \end{align*}
Then, since $\varrho_i(t_i)=0$,
 \begin{align*}
  s_D(x)&=\lim_{N\to\infty}\sum_{i=1}^N\varrho(x(t_i),x(t_{i+1}))=\lim_{N\to\infty}\sum_{i=1}^N\Big(\frac 1N\varrho_i'(t_i)+\frac12\frac 1{N^2}\varrho_i''(t_i^*)\Big).
 \end{align*}
  We show that $\varrho_i'(t_i)=\ir F(x(t_i),x'(t_i))$ and that $\varrho_i''(t_i^*)$ have a uniform bound as $t_i^*\to t_i$. Consider a coordinate system given by \eqref{eq:sphcoord} and set $x^\alpha=\varphi^\alpha(x)$, $x^n=r(x)$. Then
 \begin{align}\label{eq:rhoD1}
   &(\varrho_{x(t_i)}\circ x)'=\frac{\partial\varrho}{\partial\varphi^\alpha}(x(t_i),x)\frac{\partial x^\alpha}{\partial t}+\frac{\partial\varrho}{\partial r}(x(t_i),x)\frac{\partial x^n}{\partial t},\\\label{eq:rhoD2}
 &\begin{aligned}[t]
  (\varrho_{x(t_i)}\circ x)''&=\frac{\partial^2\varrho}{\partial\varphi^\beta\partial \varphi^\alpha}(x(t_i),x)\frac{\partial x^\alpha}{\partial t}\frac{\partial x^\beta}{\partial t}
  +\frac{\partial^2\varrho}{\partial r\partial \varphi^\alpha}(x(t_i),x)\frac{\partial x^\alpha}{\partial t}\frac{\partial x^n}{\partial t}\\
  &+\frac{\partial^2\varrho}{\partial \varphi^\alpha \partial r}(x(t_i),x)\frac{\partial x^\alpha}{\partial t}\frac{\partial x^n}{\partial t}
  +\frac{\partial^2\varrho}{\partial r\partial r}(x(t_i),x)\frac{\partial x^n}{\partial t}\frac{\partial x^n}{\partial t}\\
  &+\frac{\partial\varrho}{\partial \varphi^\alpha}(x(t_i),x)\frac{\partial^2 x^\alpha}{\partial t^2}
  +\frac{\partial\varrho}{\partial r}(x(t_i),x)\frac{\partial ^2x^n}{\partial t^2}.
 \end{aligned}
 \end{align}
 According to Remark~\ref{rmrk:rhoparc},
 \[
 \frac{\partial\varrho}{\partial\varphi^\alpha}(x(t_i),x(t))\to 0\quad \mbox{and} \quad \frac{\partial\varrho}{\partial r}(x(t_i),x(t))\to \ir F\left(x(t_i),\frac{1}{\|\dot x(t_i)\|}\dot x(t_i)\right)
 \]
 as $t\to t_i$. Substituting these into \eqref{eq:rhoD1} we obtain $\varrho_i'(t_i)=\ir F(x(t_i),x'(t_i))$. Furthermore, also from Remark~\ref{rmrk:rhoparc}, we know that
 \[
 \frac{\partial^2\varrho}{\partial\varphi^\beta\partial \varphi^\alpha}(x(t_i),x(t)),\quad \frac{\partial^2\varrho}{\partial \varphi^\alpha \partial r}(x(t_i),x(t)),\quad \frac{\partial^2\varrho}{\partial r\partial r}(x(t_i),x(t))
 \]
 are all bounded as $t\to t_i$. So we only need to check that $\frac{\partial x^\alpha}{\partial t}$, $\frac{\partial x^n}{\partial t}$, $\frac{\partial^2 x^\alpha}{\partial t^2}$ and $\frac{\partial ^2x^n}{\partial t^2}$ are also bounded. For simplicity we assume $t_i=0$, $x(0)=0$. Since $x'(0)\neq 0$, on a small open interval containing $0$, we can reparametrize $x(t)$ so that $r(x(0),x(t))=t$, $t>0$. From this it is clear that $\frac{\partial x^n}{\partial t}$ and $\frac{\partial ^2x^n}{\partial t^2}$ are bounded as $t\to 0^+$.

 To prove that $\frac{\partial x^\alpha}{\partial t}$ and $\frac{\partial^2 x^\alpha}{\partial t^2}$ are also bounded, set $h=\varphi^\alpha(x(0),\cdot)$ and consider the `projection' $u(t):=\frac 1tx(t)$ of $x(t)$ to the sphere $S^{n-1}$. The function $h$ is positive homogeneous of degree $0$, so $h\circ x=h\circ u$. Using \eqref{eq:pert} again we obtain that $u$, $u'$ and $u''$ can be extended to $0$. Since $h$ is smooth on $\ir U\subset S^{n-1}$,  $(h\circ u)'(t)$ and $(h\circ u)''(t)$ have to be bounded as $t\to 0^+$.
 
 So we can conclude the proof:
 \begin{align*}
 s_D(x)&=\lim_{N\to\infty}\sum_{i=1}^N\Big(\frac1N\ir F(x(t_i),x'(t_i))+\frac12\frac 1{N^2}\varrho_i''(t_i^*)\Big)\\
  &=\int \ir F(x(t),x'(t))dt+\lim_{N\to\infty}\frac 1N\sum_{i=1}^N\frac 1N\varrho_i''(t_i^*)=\int \ir F(x(t),x'(t))dt.
  \end{align*}
\end{proof}

\begin{theorem}\label{thm:4}
 (a) The arc length $s_D(x(t))$ of a curve $x(t)$ connecting the points $a$ and $b$ is in general greater than $\varrho(a,b)$.

 (b) Let $D^n=(\RR^n,\varrho)$ be a distance space satisfying (D1)--(D7) and $F^n=(\RR^n,\ir F)$ the weak Finsler space determined by \eqref{eq:distF} having a geodesic between any pair of points $a$, $b$. Then $\varrho=\varrho^F$, i.e., $D^n=F^n$ if and only if  $s_D(q[a,b])=\varrho(a,b)$ for every quasigeodesic $q[a,b]$ of $D^n$.	
\end{theorem}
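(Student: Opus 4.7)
For part~(a), I would apply (D3) iteratively to an arbitrary subdivision $\alpha = t_0 < t_1 < \cdots < t_N = \beta$, giving the telescoping bound $\sum_{i=0}^{N-1}\varrho(x(t_i),x(t_{i+1})) \ge \varrho(x(t_0),x(t_N))=\varrho(a,b)$. Passing to the limit in~\eqref{eq:rhodist} preserves this, so $s_D(x(t)) \ge \varrho(a,b)$. That strictness holds in general will follow retroactively from part~(b): any curve other than a quasigeodesic cannot realize equality.

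For part~(b), the pivotal tool is Theorem~\ref{thm:3}: for every curve $x$ joining $a$ and $b$, $s_D(x)=s_F(x)$, so~\eqref{eq:Fdist} rewrites as $\varrho^F(a,b) = \inf_x s_D(x)$, the infimum taken over all connecting curves. Combined with part~(a), this immediately yields the universal inequality $\varrho^F(a,b) \ge \varrho(a,b)$, valid without any further hypothesis. For the ``$\Leftarrow$'' direction I would invoke Theorem~\ref{thm:qC} to produce a quasigeodesic $q[a,b]$; the standing hypothesis then gives $\varrho^F(a,b) \le s_F(q[a,b]) = s_D(q[a,b]) = \varrho(a,b)$, closing the sandwich and forcing $\varrho^F=\varrho$, i.e.\ $D^n=F^n$.

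For ``$\Rightarrow$'', starting from $\varrho = \varrho^F$, the distance spheres in $D^n$ and the geodesic spheres in $F^n$ are literally the same hypersurfaces, since both equal $\{q\in\RR^n\mid\varrho(a,q)=r\}=\{q\in\RR^n\mid\varrho^F(a,q)=r\}$. Consequently the osculation construction of Section~\ref{ssec:osc} producing the quasigeodesic $q[a,b]$ coincides with the osculation construction of Definition~\ref{def:osc} producing the Finsler geodesic $g[a,b]$, so $q[a,b]=g[a,b]$. Since the Finsler geodesic given by the osculation recipe is a minimizer (the infimum in~\eqref{eq:Fdist} is attained on it by the hypothesis that $F^n$ admits a geodesic between any two points), we have $s_F(g[a,b])=\varrho^F(a,b)=\varrho(a,b)$, hence $s_D(q[a,b])=\varrho(a,b)$. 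The one delicate step I anticipate is this identification of quasigeodesics with minimizing Finsler geodesics: one must verify both that the two osculation recipes really agree once the spheres coincide, and that the resulting geodesic actually realizes the Finsler distance. Everything else amounts to combining part~(a), Theorem~\ref{thm:3}, and the definition of $\varrho^F$.
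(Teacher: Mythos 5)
Your proposal is correct and follows essentially the same route as the paper: part (a) via iterated application of (D3), the ``$\Leftarrow$'' direction by sandwiching $\varrho^F$ between $\varrho$ and $s_D(q[a,b])$ using Theorem~\ref{thm:3} and part (a), and the ``$\Rightarrow$'' direction by observing that $\varrho=\varrho^F$ makes distance spheres and geodesic spheres coincide, so that the minimizing Finsler geodesic is exactly the quasigeodesic and realizes $\varrho(a,b)$. The ``delicate step'' you flag is precisely what the paper spells out, checking pointwise that each $g(t_0)$ is an osculation point for every choice of generators $\bar a,\bar b\in g(t)$, so that $g$ satisfies Definition~\ref{eq:qged}.
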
 

\begin{proof}
 (a) In the definition \eqref{eq:rhodist} of the arc length $s_D$ the distance function $\varrho$ is used only for nearby points $x_{i-1}$, $x_i$. 
 So $s_D$ is independent of the value of $\varrho$ for two distant points $a$, $b$.
 
 Furthermore, by condition (D3) we have $\varrho(x_0,x_1)+\varrho(x_1,x_2)\geq\varrho(x_0,x_2)$.
Also $\varrho(x_0,x_2)+\varrho(x_2,x_3)\geq\varrho(x_0,x_3)$, etc. and the sign $>$ can effectively occur. From these follows that $s_D(x(t))\geq\varrho(a,b)$.

 (b) Suppose that for all quasigeodesic $q[a,b]$ we have $s_D(q[a,b]))=\varrho(a,b)$. By part (a) and Theorem~\ref{thm:3}, for any other curve $x[a,b]$ connecting $a$ and $b$ we have $s_F(x[a,b])=s_D(x[a,b])\geq\varrho(a,b)$. This means that $q[a,b]$ has the smallest arc length also in $F^n=(\RR^n,\ir F)$. Thus $q[a,b]$ is a geodesic arc $g[a,b]$ of $F^n$, and  $\varrho^F(a,b)=s_F(g[a,b])=s_D(q[a,b])=\varrho(a,b)$.

Conversely, suppose that $\varrho=\varrho^F$. Then the distance spheres of $D^n$ coincide with the geodesic spheres of the Finsler space $F^n$ induced by the $D^n$. Let $g(t)$ be a geodesic curve of the induced Finsler space through $a$ and $b$, and $g(t_0)\in g[a,b]\subset g(t)$. Then 
\begin{equation}\label{eq:sDg}
   s_D(g(t))\overset{\textrm{Th.\ref{thm:3}}}=s_F(g(t))=\varrho^F(a,b)=\varrho(a,b),
\end{equation}
and the geodesic spheres $S_a(r)$ and $S_b(\bar r)$ through $g(t_0)$ [where $r=\varrho^F(a,g(t_0)$ and $\bar r=\varrho^F(b,g(t_0))$] osculate each other at $g(t_0)$. But $S_a(r)$ and $S_b(\bar r)$ are at the same time distance spheres of $D^n$ too. So $g(t_0)$ is a point of the osculation curve $o(t_i;a,b)$. This is true for any $g(t_0)\in g(t)$. Thus $g(t)=o(t;a,b)$. Also this is true in case of any $\bar a,\bar b\in g(t)$: $g(t)\in o(t;\bar a,\bar b)$. That is the points of $g(t)$ are the generators of the same osculation curve. Then $g(t)$ is a quasigeodesic $q(t)$ (see Definition~\ref{eq:qged}, and the paragraph after it). Then \eqref{eq:sDg} yields that $s_D(q[a,b])=\varrho(a,b)$.
\end{proof}

\subsection{Projectively flat $F^n=(\RR^n,\ir F)$}
The indicatrices of a reversible Finsler space lie in the tangent spaces, which are linear spaces equipped with a Minkowski metric. These indicatrices are symmetric in the sense of the linear (affine) tangent space. A geodesic sphere of $F^n$ on the base manifold is geodesically symmetric. This means that a metrical reflection through its center takes the geodesic sphere into itself. However in our $F^n=(\RR^n,\ir F)$ we can speak of the symmetry of a geodesic sphere also in the sense of the affine space $\RR^n$, or -- if we endow $\RR^n$ with a Euclidean metric -- we can speak of symmetry in the Euclidean sense. We will use the symmetry of a geodesic sphere in this last sense.

In an $F^n=(\RR^n,\ir F)$ from the symmetry of the indicatrices (from the reversibility of the metric) does not follow the symmetry of the geodesic spheres. The symmetry of the geodesic spheres is a more restrictive condition.

\begin{theorem}\label{thm:5}
  A projectively flat reversible Finsler space $F^n=(\RR^n,\ir F)$ with symmetric geodesic spheres is a Minkowski space.
\end{theorem}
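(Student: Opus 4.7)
The plan is to derive a first-order PDE for $\ir F$ from the symmetry and reversibility hypotheses, then combine it with the classical Hamel--Rapcsak projective flatness criterion to force $\ir F$ to be independent of the base point.

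First I would extract the PDE. Fix $a \in \RR^n$ and $v \in \RR^n \setminus \{0\}$. Projective flatness identifies the geodesic through $a$ in direction $v$ with the straight line $\ell = \{a + sv : s \in \RR\}$, and strict convexity of $S_a(r)$ implies that $\ell$ meets $S_a(r)$ in exactly two points. The Euclidean symmetry of $S_a(r)$ about $a$ forces these to be $a \pm t(r)v$ for some $t(r) > 0$. Writing the Finsler distance along $\ell$ as a Euclidean integral and using reversibility yields
\[
 \int_0^{t(r)} \ir F(a + sv, v)\, ds = \int_0^{t(r)} \ir F(a - sv, v)\, ds = r.
\]
Differentiating in the parameter $t$ gives $\ir F(a + tv, v) = \ir F(a - tv, v)$ for every $a$, $v$, $t$; replacing $a$ by $a + \tau v$ and varying $\tau$ and $t$ shows that $s \mapsto \ir F(a + sv, v)$ is constant along $\ell$. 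Differentiating at $s = 0$ produces
\[
 y^i \frac{\partial \ir F}{\partial x^i}(x, y) = 0 \qquad \text{for all } (x, y) \in T\RR^n \setminus \{0\}.
\]

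Next I would invoke the Hamel--Rapcsak theorem, which states that a Finsler metric on an open set of $\RR^n$ is projectively flat if and only if
\[
 y^k \frac{\partial^2 \ir F}{\partial x^k \partial y^l} = \frac{\partial \ir F}{\partial x^l}.
\]
Differentiating the identity $y^i \ir F_{x^i} = 0$ with respect to $y^l$ yields $\ir F_{x^l} + y^i \ir F_{x^i y^l} = 0$, i.e., $y^i \ir F_{x^i y^l} = -\ir F_{x^l}$. Substituting the Hamel--Rapcsak equation on the left gives $\ir F_{x^l} = -\ir F_{x^l}$, so $\ir F_{x^l} \equiv 0$. Therefore $\ir F(x, y)$ depends only on $y$, and $(\RR^n, \ir F)$ is a Minkowski space.

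The delicate step is the first one, where one must convert the purely set-theoretic Euclidean symmetry of $S_a(r)$ into the pointwise identity $\ir F(a + tv, v) = \ir F(a - tv, v)$. This uses projective flatness to identify the geodesic with a Euclidean line, strict convexity of the geodesic spheres to guarantee that $\ell$ meets $S_a(r)$ in exactly two points, and reversibility to collapse the two oriented Finsler integrals into the same expression. Once the PDE $y^i \ir F_{x^i} = 0$ is in hand, its combination with Hamel--Rapcsak is a one-line symbolic calculation, so the geometric content of the theorem is essentially concentrated in that first reduction.
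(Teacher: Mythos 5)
Your proof is correct, and it diverges from the paper's in an interesting way. The geometric half is essentially the same reduction the paper performs, only centred differently: the paper takes a geodesic segment $g[\alpha,\beta]$, observes via reversibility that its endpoints lie on a common geodesic sphere about the arc-length midpoint $m$, and uses the Euclidean symmetry of $S_m$ to get the midpoint functional equation $\tfrac{1}{2}\bigl(f(\alpha)+f(\beta)\bigr)=f\bigl(\tfrac{\alpha+\beta}{2}\bigr)$ forcing $f$ affine; you instead symmetrize about the centre $a$ of $S_a(r)$ and equate the two Finsler lengths $\int_0^{t(r)}\ir F(a\pm sv,v)\,ds=r$, which after differentiating in $t$ and recentering gives $\frac{d}{ds}\ir F(a+sv,v)=0$. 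These are equivalent statements, since along $g(t)=g(0)+f(t)\dot g(0)$ one has $\ir F(g(t),\dot g(0))=1/f'(t)$, so ``$f$ affine'' and ``$y^i\ir F_{x^i}=0$'' are the same condition. Where you genuinely part ways is the final step: the paper quotes a result of Binh--Kert\'esz--Tam\'assy (Theorem~5 of the cited paper) that a Finsler space which is projectively flat \emph{in a parameter-preserving manner} is Minkowskian, whereas you feed $y^i\ir F_{x^i}=0$ into the classical Hamel criterion $y^k\ir F_{x^ky^l}=\ir F_{x^l}$ and obtain $\ir F_{x^l}\equiv0$ by a one-line computation. Your route is more self-contained (Hamel's equation is textbook material and yields the stronger conclusion that $\ir F$ is literally independent of $x$, not merely that the space is isometric to a Minkowski space), at the cost of invoking the analytic characterization of projective flatness rather than staying with the paper's purely metric-geometric language. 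Two minor points you should make explicit: the identity $\ir F(a+tv,v)=\ir F(a-tv,v)$ is only obtained for $t$ small enough that the straight segments from $a$ are minimizing (which suffices, since you then differentiate at $t=0$ after recentering); and the ``exactly two intersection points'' claim needs only that $r$ is below the injectivity radius at $a$, so the strict convexity you invoke (a standing assumption of the paper, not an explicit hypothesis of this theorem) is not actually essential.
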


\begin{proof}
If $F^n=(\RR^n,\ir F)$ is projectively flat, a geodesic $g(t)$ parametrized by arc-length is of the form $g(t)=g(0)+f(t)\dot g(0)$, where $f(t)$ is a continuous strictly increasing function. We say that $F^n$ is projectively flat in a parameter preserving manner, if all such geodesics have constant Euclidean speed, that is, $f(t)$ is an affine function. In this case $F^n$ is a Minkowski space (see \cite{binh2013projectively}  Th.~5).
So we have to show only that from the symmetry of the geodesic spheres follows that $f(t)$ is affine for any geodesic.

Let $g(t)$ be a geodesic of a projectively flat $F^n=(\RR^n,\ir F)$. Then $g(t)=g(0)+f(t)\dot g(0)$. We can suppose that the Euclidean norm of $\dot g(0)$ is $1$: $\|\dot g(0)\|=1$.

The Euclidean distance between two points $g(t_2)$ and $g(t_1)$, $t_2>t_1$, of a geodesic $g(t)$ is
\begin{equation}\label{eq:flat}
 \varrho^E(g(t_2),g(t_1))=\|(g(0)+f(t_2)\dot g(0))-(g(0)+f(t_1)\dot g(0))\|=(f(t_2)-f(t_1)).
\end{equation}
Geodesics minimize distance locally, so if the interval $I$ is small enough, we have $\varrho^F(g(t_2),g(t_1))=|t_2-t_1|$ for all $t_1,t_2\in I$. Let $\alpha,\beta\in I$, $\beta>\alpha$, and set $a=g(\alpha)$, $b=g(\beta)$ and $m:=g(\frac{\beta-\alpha}2)$. Since $F$ is reversible, $a$ and $b$ both are on the distance sphere $S_m(\frac{\beta-\alpha}2)$. However,  $S_m(\frac{\beta-\alpha}2)$ is symmetric in $\RR^n$ by our assumption, so we have $\varrho^E(a,m)=\varrho^E(b,m)$. Then \eqref{eq:flat} gives
\[
 f\left(\frac{\beta-\alpha}2\right)-f(\alpha)=f(\beta)-f\left(\frac{\beta-\alpha}2\right),
\]
and hence
\[
 \frac{f(\beta)+f(\alpha)}2=f\left(\frac{\beta-\alpha}2\right).
\]
Since $f$ is continuous and $\alpha,\beta\in I$ are arbitrary, $f$ must be affine on $I$.
\end{proof}

\begin{corollary}
 A Riemannian space over $\RR^n$ with constant curvature having in $\RR^n$ symmetric geodesic spheres is Euclidean.
\end{corollary}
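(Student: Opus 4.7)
The plan is to reduce the corollary to Theorem~\ref{thm:5} by verifying that the three hypotheses of that theorem are met in the Riemannian constant-curvature setting, and then recognize that a Riemannian Minkowski space must be Euclidean.

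First I would check projective flatness. By Beltrami's classical theorem, a Riemannian manifold is projectively flat (i.e., admits a coordinate system in which the geodesics appear as straight line segments) if and only if it has constant sectional curvature. Since the hypothesis of the corollary guarantees constant curvature on $\RR^n$, Beltrami's theorem provides the projective flatness needed to apply Theorem~\ref{thm:5}. Reversibility is automatic in the Riemannian category, because the length functional $\ir F(p,y)=\sqrt{g_p(y,y)}$ satisfies the absolute homogeneity condition (F2a). Symmetry of geodesic spheres in $\RR^n$ is assumed outright.

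Applying Theorem~\ref{thm:5}, the Finsler structure $(\RR^n,\ir F)$ associated to our Riemannian manifold is a Minkowski space, meaning $\ir F(p,y)=\ir F(y)$ depends only on $y\in T_p\RR^n$ (and is, in particular, translation invariant with respect to $p$). At this point I would invoke the additional Riemannian structure: $\ir F(p,y)^2=g_p(y,y)$ is a positive definite quadratic form in $y$ whose coefficients, by Minkowski invariance, do not depend on $p$. Hence the metric tensor $g_{ij}$ is a constant symmetric positive definite matrix on $\RR^n$, which by a linear change of coordinates can be brought to the identity. Thus the space is the standard Euclidean space.

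The only possible subtlety is the first step: making sure Beltrami's theorem applies as stated over all of $\RR^n$ rather than only locally, so that the ``projectively flat $F^n=(\RR^n,\ir F)$'' hypothesis of Theorem~\ref{thm:5} is really satisfied globally on the base $\RR^n$. Since we are assuming the Riemannian metric is defined globally on $\RR^n$ and has constant sectional curvature, Beltrami's construction of the projective chart works on each normal neighborhood, and globally on $\RR^n$ for the nonpositive and zero curvature cases; in the positive curvature case one simply notes that the hypothesis that the base manifold is $\RR^n$ (simply connected and noncompact) already excludes the spherical model, so only the Euclidean and hyperbolic models remain, both projectively flat on $\RR^n$. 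Apart from this verification, the argument is a clean concatenation of Theorem~\ref{thm:5} with Beltrami and the observation that a Riemannian Minkowski space is Euclidean.
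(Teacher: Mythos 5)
Your proof is correct and follows the same route as the paper: constant curvature gives projective flatness (Beltrami), Theorem~\ref{thm:5} then yields a Minkowski space, and a Minkowskian Riemannian metric has constant coefficients, hence is Euclidean. The extra care you take with reversibility and the global validity of Beltrami's theorem is a welcome elaboration but does not change the argument.
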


\begin{proof}
 A Riemannian space of constant curvature is projectively flat. If it has symmetric geodesic spheres, then it is Minkowskian by Theorem~\ref{thm:5}. But a Minkowskian Riemannian space is Euclidean.
\end{proof}

%\begin{acknowledgements}
%If you'd like to thank anyone, place your comments here
%and remove the percent signs.
%\end{acknowledgements}

% BibTeX users please use one of
%\bibliographystyle{spbasic}      % basic style, author-year citations
\bibliographystyle{siam2}      % mathematics and physical sciences
\bibliography{finsler2}   % name your BibTeX data base

\end{document}